\newtheorem{theorem}{Theorem}[section]
\newtheorem{proposition}[theorem]{Proposition}
\newtheorem{corollary}[theorem]{Corollary}
\newtheorem{lemma}[theorem]{Lemma}
\newtheorem{remark}[theorem]{Remark}
\newtheorem{definition}[theorem]{Definition}
\newcommand{\R}{\mathbb{R}} 
\renewcommand{\P}{\mathbb{P}}
\newcommand{\E}{\mathbb{E}} 
\newcommand{\V}{\mathbb{V}} 
\newcommand{\telque}{\;|\;}
\newcommand{\FDR}{\mbox{FDR}}
\newcommand{\pFDR}{\mbox{pFDR}}
\newcommand{\mbp}{\mathbb{P}}
\newcommand{\mtc}{\mathcal}
\newcommand{\mbf}{\mathbf}
\newcommand{\wh}[1]{{\widehat{#1}}}
\newcommand{\ol}[1]{\overline{#1}}
\newcommand{\ind}[1]{{\mbf{1}\{#1\}}}
\newcommand{\prob}[1]{\mbp\brac{#1}}
\newcommand{\brac}[1]{\left[#1\right]}
\newcommand{\cH}{{\mtc{H}}}
\newcommand{\Pow}{\mbox{Pow}}
\newcommand{\LSU}{\mbox{LSU}}
\newcommand{\LSD}{\mbox{LSD}}
\newcommand{\SU}{\mbox{SU}}
\newcommand{\SD}{\mbox{SD}}
\newcommand{\distsu}{\mathcal{D}}
\newcommand{\distsd}{\widetilde{\mathcal{D}}}
\newcommand{\FDP}{\mbox{FDP}} 
\newcommand{\FNR}{\mbox{FNR}} 
\newcommand{\minrho}{-(m-1)^{-1}} 
\renewcommand{\H}{H} 
\newcommand{\PExI}{\ol{P}^{I}}
\newcommand{\PExN}{\ol{P}^{N}}
\renewcommand{\l}{\ell}
\newcommand{\Sti}[2]{\genfrac{\{}{\}}{0pt}{}{#1}{#2}}
\begin{document}

\begin{frontmatter}


\title{Exact calculations for false discovery proportion with application to least favorable configurations}
\runtitle{Exact calculation for FDP}

\runauthor{Roquain, E. and Villers, F.}

\begin{aug}
\author{\fnms{Etienne} \snm{Roquain}\ead[label=e1]{etienne.roquain@upmc.fr}}
\address{
UPMC Univ Paris 6, LPMA,\\ 4, Place Jussieu, 75252 Paris cedex 05, France\\
\printead{e1}\\
\phantom{E-mail: etienne.roquain@upmc.fr\ }}
\and
\author{\fnms{Fanny} \snm{Villers}\ead[label=e2]{fanny.villers@upmc.fr}}
\address{
UPMC Univ Paris 6, LPMA,\\ 4, Place Jussieu, 75252 Paris cedex 05, France\\
\printead{e2}\\
\phantom{}}

\runauthor{Roquain, E. and Villers, F.}
\affiliation{UPMC University of Paris 6}
\end{aug}

\begin{abstract}
In a context of multiple hypothesis testing, we provide several new exact calculations related to the false discovery proportion (FDP) of step-up and step-down procedures. For step-up procedures, we show that the number of erroneous rejections conditionally on the rejection number is simply a binomial variable, which leads to  explicit  computations of the c.d.f., the {$s$-th} moment and the mean of the FDP, the latter corresponding to the false discovery rate (FDR). For step-down procedures, we derive what is to our knowledge the first explicit formula for the FDR valid for any alternative c.d.f.  of the $p$-values. We also derive explicit computations of the power for both step-up and step-down procedures.
These formulas are ``explicit'' in the sense that they only involve the parameters of the model and the c.d.f. of the order statistics of i.i.d. uniform variables. The $p$-values are assumed either independent or coming from an equicorrelated multivariate normal model and an additional mixture model for the true/false hypotheses is used. 
This new approach is used to investigate new results  which are of interest in their own right,  related to least/most favorable configurations for the FDR and  the variance of  the FDP.
\end{abstract}

\begin{keyword}[class=AMS]
\kwd[Primary ]{62J15}
\kwd[; secondary ]{62G10}
\kwd[; tertiary ]{60C05}
\end{keyword}

\begin{keyword}
\kwd{False discovery rate}\kwd{false discovery proportion}\kwd{multiple testing}\kwd{least favorable configuration}  \kwd{power} \kwd{equicorrelated multivariate normal distribution} \kwd{step-up}\kwd{step-down}
\end{keyword}

\end{frontmatter}



\section{Introduction}\label{sec:intro}

When testing simultaneously $m$ null hypotheses, the false discovery proportion (FDP) is defined as the proportion of errors among all the rejected hypotheses and the false discovery rate (FDR) is defined as the average of the FDP. Since its introduction by Benjamini and Hochberg (1995) \cite{BH1995}, the FDR has become a widely used type I error criterion, because it is adaptive to the number of rejected hypotheses. However, the randomness of the denominator in the FDP expression makes the study of the FDP and of the FDR mathematically challenging.

There is a considerable number of papers that deal with the FDR control under different dependency conditions between the $p$-values (see for instance \cite{BH1995,BY2001, Sar2008, BR2008EJS, BR2008b}). In the latter, the goal is, given a prespecified level $\alpha$, to provide a procedure with a FDR smaller than $\alpha$ (for any value of the data law in a given distribution subspace, e.g. for some dependency assumptions). For instance, the famous linear step-up procedure (LSU), also called the Benjamini-Hochberg procedure \cite{BH1995} (based on the Simes's line \cite{Sim1986}), has been proved to control the FDR under independence and under a positive dependence assumption (see \cite{BH1995,BY2001}). While controlling the FDR, one wants to maximize the power of the procedure, the power being generally defined as the  averaged number of correct rejections divided by the number of false hypotheses.

In this paper, we deal with the ``reversed'' approach: given the procedure, we aim to compute the corresponding FDR,  or more generally the $s$-th moment and the c.d.f. of the FDP, and the Power. 
For procedures using a constant thresholding and under a mixture model assuming independence between the $p$-values, Storey (2003) \cite{Storey2003}  addressed theses questions, while introducing the positive false discovery
rate (pFDR) (see also Chi and Tan \cite{Chi2008}). 
Considering step-up or step-down methods requires more efforts: when the $p$-values are all i.i.d. uniform, 
the exact distribution of the rejection number has been computed by Finner and Roters (2002)
 for step-up and step-down procedures, which leads to a computation of the FDR in the degenerate case where all the hypotheses are true \cite{FR2002}.
 When the $p$-values follow the particular ``Dirac-uniform'' configuration, that is, when the $p$-values associated to false hypotheses are equal to $0$ and when the $p$-values associated to true hypotheses are i.i.d. uniform, the FDP distribution has been computed by Dickhaus (2008) for general step-up-down procedures (see Section~3.7 in \cite{Dick2008}). 
 For an arbitrary distribution of the $p$-values under the alternative, Ferreira and Zwinderman (2006) gave a first exact expression for the moment of the FDP of the LSU procedure under independence  \cite{FZ2006}. 
Together with other recent approaches  (see, e.g. \cite{Roq2007, Sar2008, BR2008EJS, RW2009}), this puts forward a connection between the FDR expression and the rejection number distribution in the step-up case, under independence of the $p$-values.
Additionally, Sarkar (2002) found an exact formula for the FDR, which is valid for any step-up-down procedure  \cite{Sar2002}. However, it  involves the c.d.f. of ordered components of dependent variables, by contrast with \cite{FR2002,Dick2008} and present paper, using c.d.f. of ordered components of i.i.d. uniform variables, so obtaining substantially more explicit formulas.

Meanwhile, some of these approaches have also been investigated from the asymptotic point of view, when the number of hypotheses $m$ tends to infinity;  Chi (2007)  computed the asymptotic rejection number distribution of the LSU procedure by introducing the criticality phenomenon \cite{Chi2007}, while Finner et al. (2007)  computed the limiting FDR of the LSU procedure for positively correlated $p$-values (following an equicorrelated multivariate normal model) in the particular Dirac-uniform configuration \cite{FDR2007}. In this paper, the point of view will be mainly nonasymptotic. 

The new contributions of the present paper are as follows:
\begin{itemize}
\item[\textbullet] For a step-up procedure using a threshold $(t_k)_k$, we proved that the distribution of the number of erroneous rejections conditionally on $k$ rejections is a binomial variable of parameters $k$ and $\pFDR(t_k)={\pi_0 t_k}/{G(t_k)}$, where $\pi_0$ is the (averaged) proportion of true nulls and $G$ is the c.d.f. of the $p$-values. This  provides new explicit formulas for  the c.d.f. of the FDP, the $s$-th moment of the FDP (providing a correction with respect to \cite{FZ2006} for $s\geq 3$) and for the FDR, for any alternative distribution of the $p$-values.
We also give an expression for the power, which yields a considerably less complex calculation than in \cite{Glueck2008}, see Section~\ref{sec:mainresults}.
\item[\textbullet] Considering a step-down procedure, a new  explicit formula for the FDR and the power  is presented under any alternative distribution of the $p$-values. To our knowledge, this expression is the first one  that clearly relates the FDR  to (joint) rejection number distribution in the case of a step-down procedure and that is valid under any alternative, see Section~\ref{sec:mainresults}.
\item[\textbullet]  All the previous results, valid under independence between the $p$-values, 
 can be easily extended to the case where the $p$-value family follows an equicorrelated multivariate normal model, by using a simple modification, see Section~\ref{sec:deppossu}. However, this requires the use of a nonnegative correlation. The case of a possibly negative correlation is considered when only two hypotheses are tested, see Section~\ref{sec:depneg}.
\item[\textbullet] Our formulas corroborate the classical multiple testing results while they give rise to several new results. The two main corollaries hold under independence  and are as follows. First, in Section~\ref{appli_indep},  for the linear step-down procedure,  we prove that a $p$-value configuration maximizing the FDR, i.e. a least favorable configuration (LFC) for the FDR, is the Dirac-uniform configuration.
 Additionally, considering a general step-down procedure, we define a new condition on the threshold ensuring that the Dirac-uniform configuration is still a LFC. As discussed in Section~\ref{appli_indep}, this condition is different from the one of the step-up case. Second, 
 we found an exact expression of the minimum and the maximum of the variance of the FDP of the LSU, these extrema being taken over some $p$-value configuration sets of interest. The latter allows to better understand the behavior of the FDP around the FDR. 
 In particular, this puts forward that the convergence of the FDP towards the FDR is particularly slow in the sparse case,
see Section~\ref{sec:LFCvar}.
\end{itemize}

 All our formulas are valid nonasymptotically, that is, they hold for each $m\geq 2$. As a counterpart, they inevitably have a general form that can appear somewhat complex at first sight. For instance, denoting by $\Psi_m$ the c.d.f. of the order statistics of $m$ i.i.d. uniform variables on $(0,1)$, the FDR formula for step-up procedures requires the computation of $\Psi_m$  at a given point of $\mathbb{R}^m$ (at most), while the FDR formula for step-down procedures requires the computation of $\Psi_m$ at $2m$ different points of $\mathbb{R}^m$ (at most). 
However, 
let us underline what are to our opinion the two main interests of this exact approach: 
\begin{itemize}
\item[-]  For some model parameter configurations and after possible simplifications, the formulas are usable for further theoretical studies (monotonicity with respect to a parameter, convergence when $m$ tends to infinity,...), as in Theorem~\ref{appli_indep_sd} and Theorem~\ref{cor_LCPVAR}.
\item[-] 
For $m$ not too large (say $m\leq 1000$), these formulas can be fully computed numerically, e.g. plotting exact graphs. Thus, they
avoid using cumbersome and less accurate simulation experiments (extensively used in multiple testing literature), see for instance Section~\ref{appli_dep}.
\end{itemize}


\section{Preliminaries}\label{sec:prel}

\subsection{Models for the $p$-value family}\label{sec:setting}

On a given probability space, we consider a finite set of $m\geq 2$ null hypotheses, tested by a family of $m$ $p$-values $\mbf{p}=(p_i, i\in\{1,...,m\})$. 
In this paper, for simplicity, we skip somewhat the formal definition of $p$-values by defining directly a $p$-value model, that is, by specifying the joint distribution of $\mbf{p}$.

In what follows, we denote by $\mathcal{F}$ the set containing c.d.f.'s from $[0,1]$ into $[0,1]$ that are continuous 
and by $F_0(t)=t$ the c.d.f. of the uniform distribution over $[0,1]$ (we restricted our attention to the case where $F_0(t)=t$ only for simplicity, all our formulas will be valid for an arbitrary $F_0\in \mathcal{F}$).

\begin{definition}[Conditional $p$-value models] 
\begin{itemize}
\item[\textbullet]
The $p$-value family $\mbf{p}$ follows the \textit{conditional independent model} with parameters $\H=(H_i)_{1\leq i\leq m}\in \{0,1\}^m$ and $F_1\in \mathcal{F}$, that we denote by  $\mbf{p}\sim P^I_{(H,F_1)} $, if $\mbf{p}=(p_i, i\in\{1,...,m\})$ is a family of mutually independent variables and for all $i$, 
$$p_i \sim \left\{\begin{array}{ll} F_0 & \mbox{ if } H_i=0 \\  F_1 &\mbox{ if } H_i=1\end{array}\right..$$
\item[\textbullet]
The $p$-value family $\mbf{p}$ follows the \textit{conditional equicorrelated multivariate normal model} (say for short, conditional EMN model) with parameters $\H=(H_i)_{1\leq i\leq m}\in \{0,1\}^m$, $ \rho\in[\minrho,1],$ and $\mu>0$, that we denote by $\mbf{p}\sim P^N_{(\H,\rho,\mu)}$,  if for all $i,$ $p_i\sim \overline{\Phi}(X_i+\mu H_i )$,
  where the vector $(X_i)_{1\leq i \leq m}$ is distributed as a $\mathbb{R}^m$-valued Gaussian vector with zero means and a covariance matrix having $1$ on the diagonal and $\rho$ elsewhere and where $\overline{\Phi}$ denotes the standard Gaussian distribution tail, that is, $\overline{\Phi}(z)=\prob{Z\geq z}$ for $Z\sim\mathcal{N}(0,1)$.
 In that model, the marginal distributions of the $p$-values are thus given by: 
$$p_i \sim \left\{\begin{array}{ll} F_0& \mbox{ if } H_i=0 \\  F_1(t)=\overline{\Phi}\big(\overline{\Phi}^{-1}(t)-\mu\big) &\mbox{ if } H_i=1\end{array}\right..$$
\end{itemize}
\end{definition}

The two above models are said ``conditional'' because the distribution of the $p$-values are defined conditionally on the value of  the parameter $ \H=(H_i)_{1\leq i\leq m} \in\{0,1\}^m$. The latter determines which hypotheses are true or false: $H_i =0$ (resp. $1$) if the $i$-th null hypothesis is true (resp. false). We then denote by $\cH_0(\H):=\{i\in\{1,...,m\} \telque H_i=0\}$ the set corresponding to the true null hypotheses and by $m_0(H):=|\cH_0(H)|$ its cardinal. Analogously, we define $\cH_1(H):=\{i\in\{1,...,m\} \telque H_i=1\}$ and $m_1(H):=|\cH_1(H)|=m-m_0(H)$. 

To each one of the above models, we associate the ``unconditional" version 
in which we endow the parameter $\H$ with the prior distribution $\mathcal{B}(1-\pi_0)^{\otimes m}$, making $(H_i)_{1\leq i\leq m}\in\{0,1\}^m$ being a sequence of i.i.d. Bernoulli with parameter $1-\pi_0$. On an intuitive point of view, this means that each hypothesis is true with probability $\pi_0$, independently from the other hypotheses. We thus define the following models for $\mbf{p}$ (or more precisely for $(H,\mbf{p})$): 

\begin{definition}[Unconditional $p$-value models] 
\begin{itemize}
\item[\textbullet] The couple $(H,\mbf{p})$ follows the unconditional independent model with parameters $\pi_0\in [0,1]$ and $F_1\in\mathcal{F}$, that we denote by $(H,\mbf{p}) \sim \PExI_{(\pi_0,F_1)}$ if $H\sim \mathcal{B}(1-\pi_0)^{\otimes m}$ and 
 the distribution of $\mbf{p}$ conditionally to $ \H$ is $P^I_{(\H,F_1)}$, that is, conditionally on $\H$, $\mbf{p}$ follows the conditional independent model with parameters $\H$ and $F_1$. 
In that model, the $p$-values are i.i.d. with common c.d.f. $G(t)=\pi_0 F_0(t) +(1-\pi_0) F_1(t)$. 
\item[\textbullet] The couple $(H,\mbf{p})$ follows the unconditional equicorrelated multivariate normal model (say for short, unconditional EMN model) with parameters $\pi_0\in [0,1], \rho\in[\minrho,1],$ and $ \mu>0$, that we denote by $(H,\mbf{p})\sim \PExN_{(\pi_0,\rho,\mu)} $, if $H\sim \mathcal{B}(1-\pi_0)^{\otimes m}$ and the distribution of $\mbf{p}$ conditionally on $\H$ is $P^N_{(\H,\rho,\mu)}$, that is, conditionally on $\H$, $\mbf{p}$ follows the conditional EMN model with parameters $\H$, $\rho$ and $\mu$. 
\end{itemize}
\end{definition}

An important point is that the quantities $m_0(H)$ and $m_1(H)$ are deterministic in the conditional models $P^I, P^N$, while they become random in the unconditional models $\PExI, \PExN$ with $m_0(H) \sim\mathcal{B}(m,\pi_0)$ and $m_1(H) \sim\mathcal{B}(m,1-\pi_0)$.

The conditional independent model is one of the most standard $p$-value models and was for instance considered in the original paper of Benjamini and Hochberg (1995) \cite{BH1995}. Its unconditional version, also called the ``random effects model'', is very convenient and has been widely used since its introduction by Efron et al. (2001) \cite{ETST2001}, see for instance \cite{Storey2003,GW2004}.

The conditional EMN model is a simple instance of model introducing dependencies between the $p$-values.  It corresponds to a one-sided testing on the mean of $X_i+\mu H_i$, simultaneously for all $1\leq i \leq m$. It
has become quite standard in recent FDR multiple testing literature; for instance, it was used in \cite{FDR2007} with $\mu=\infty$ and it has been considered in \cite{BKY2006,BR2008b} for numerical experiments. 
Furthermore,  Efron (2009)  recently showed that the EMN model may also be viewed as an approximation for some non-equicorrelated models, which reinforces its interest for a practical use  \cite{Efron2009}. 
 In this model, provided that $\rho\geq 0$,   the $p$-values are positively regression dependent on each one on the subset $\cH_0(H)$ (PRDS on  $\cH_0(H)$) which is one dependency condition that suffices for FDR control  (see \citep{BY2001}). The unconditional version of this model is convenient because it provides exchangeable $p$-values (although not independent when $\rho\neq 0$).
 
 Additionally, we will sometimes consider the ``Dirac-uniform configuration'' for the above models.  In that configuration, all the $p$-values corresponding to false nulls ($H_i=1$) are equal to  zero, that is, $F_1$ is constantly equal  to $1$ for the independent models and $\mu=\infty$ for the EMN models. 
This configuration was introduced in \cite{FDR2007} to increase the FDR as much as possible  for the linear step-up procedures which thus appears as a ``least favorable configuration'' for the FDR (see also Section~\ref{sec:LFCFDR}).

  \subsection{Multiple testing procedures, FDP, FDR and power}

A \textit{multiple testing procedure} $R$ is defined as an algorithm which, from the data, aims to reject part of the null hypotheses.
Below, we will consider, as is usually the case, multiple testing procedures which can be written as a function of the $p$-value family $\mathbf{p}=(p_i, i\in\{1,...,m\})$.
More formally, we define a multiple testing procedure as a measurable function $R$, which takes as input a realization of the $p$-value family $\mbf{p}\in [0,1]^m$ and which returns a subset $R(\mbf{p})$ of $\{1,...,m\}$, corresponding to the rejected hypotheses (i.e.  $i\in R(\mbf{p})$ means that the $i$-th hypothesis is rejected by $R$ for the observed $p$-values $\mbf{p}$).

Particular multiple testing procedures are step-up and step-down procedures. 
First define a \textit{threshold} as any nondecreasing sequence $\mbf{t}= (t_k)_{1\leq k \leq m} \in [0,1]^m$ (with $t_0=0$ by convention).
Next, for any threshold $\mbf{t}$, the \textit{step-up procedure} of threshold $\mbf{t}$, denoted here by {\SU}$(\mbf{t})$,  rejects the $i$-th hypothesis if $p_i\leq t_{\hat{k}}$, with 
$\wh{k}= \max\{k\in\{0,1,...,m\}\telque p_{(k)}\leq {t}_k\},$
 where $p_{(1)}\leq p_{(2)} \leq ...\leq p_{(m)}$ denote the ordered $p$-values (with the convention $p_{(0)}=0$).
In particular, the procedure {\SU}$(\mbf{t})$ using $t_k=\alpha k/m$ corresponds to the standard linear step-up procedure of Benjamini and Hochberg (1995) \cite{BH1995}, denoted here by $\LSU$. 
A less rejecting procedure uses a step-down algorithm; for any threshold $\mbf{t}$, the \textit{step-down procedure} of threshold $\mbf{t}$, denoted here by {\SD}$(\mbf{t})$,  rejects the $i$-th hypothesis if $p_i\leq t_{\tilde{k}}$, with 
$\widetilde{k}= \max\{k\in\{0,1,...,m\}\telque \forall k'\leq k, \:p_{(k')}\leq {t}_{k'}\}.$
Analogously to the step-up case,  the procedure {\SD}$(\mbf{t})$ using $t_k=\alpha k/m$ is called the linear step-down procedure and is denoted here by $\LSD$. 

Next, associated to  any multiple testing procedure $R$ and any configuration of true/false hypotheses $H\in\{0,1\}^m$, we introduce the false  discovery proportion (FDP) of $R$ as the proportion of true hypotheses in the set of the rejected hypotheses, 
that is,
\begin{eqnarray}
\label{equ_FDP}
\FDP(R,H)&=& \frac{|\cH_0(H)\cap R|}{|R|\vee 1}\,,
\end{eqnarray}
where $|\cdot|$ denotes the cardinality function. Then,  while for any multiple testing procedure $R$, the false discovery rate  (FDR) is defined as the mean of the FDP (see \citep{BH1995}), the power is (generally) defined as the expected number of correctly rejected hypotheses divided by the number of false hypotheses. 
Of course, the FDR and the power depend on the distribution that generates the $p$-values, and we may use the models defined in  Section~\ref{sec:setting}. Formally, for any distribution $P$ coming from a conditional model using parameter $H\in\{0,1\}^m$, we let 
\begin{eqnarray}
\label{equ_FDR}
\FDR(R,P)&=&\E_{\mathbf{p}\sim P} [\FDP(R(\mbf{p}),H)],\\
\label{equ_Pow}
\Pow(R,P)&=&m_1(H)^{-1}\: \E_{\mathbf{p}\sim P}\big[ |\cH_1(H)\cap R(\mbf{p})|\big].
\end{eqnarray}
Similarly, for any $p$-value distribution $\ol{P}$ coming from an unconditional model, the FDR and the Power use an additional averaging over $H\sim \mathcal{B}(1-\pi_0)^{\otimes m}$ and are defined by: 
\begin{eqnarray}
\FDR(R,\ol{P})&=&\E_{(H,\mathbf{p})\sim \ol{P}}[\FDP(R(\mbf{p}),H) ]\label{equ_FDR_ex},\\
\Pow(R,\ol{P})&=&(\pi_1m)^{-1} \:\E_{(H,\mathbf{p})\sim \ol{P}}\big[ |\cH_1(H)\cap R(\mbf{p})|\big].\label{equ_Pow_ex}
\end{eqnarray}
Remark that, for convenience, \eqref{equ_Pow_ex} is not exactly defined as the expectation of \eqref{equ_Pow}, because of the denominator. It corresponds precisely to the expected number of correctly rejected hypotheses divided by the \textit{expected} number of false hypotheses.

In the paper, to simplify the notation, we sometimes drop the explicit dependency in $\mbf{p}$, $H$ or $P$, writing e.g. $R$ instead of $R(\mbf{p})$, $\cH_0$ instead of $\cH_0(H)$,  $\FDP(R)$ instead of $\FDP(R,H)$ and $\FDR(R)$ instead of $\FDR(R,P)$.

\subsection{Some notation and useful results}\label{sec:notation}

For any $k\geq 0$ and any threshold $\mbf{t}= (t_1,...,t_k)$, we denote
\begin{equation}
\Psi_k(\mbf{t}) = \Psi_k(t_{1},...,t_{k}) =\prob{U_{(1)}\leq t_{1}, ..., U_{(k)}\leq t_k}.\label{equ_psi}
\end{equation}
 where $(U_i)_{1\leq i\leq k}$ is a sequence of $k$ variables i.i.d. uniform on $(0,1)$ and
with the convention $\Psi_0(\cdot)=1$.
In practice, quantity \eqref{equ_psi} can be evaluated using Bolshev's recursion
$\Psi_k(\mbf{t})  = 1- \sum_{i=1}^k { {k}\choose{i}} (1-t_{k-i+1})^i \Psi_{k-i}(t_1,...,t_{k-i}) $ 
or Steck's recursion
$\Psi_k(\mbf{t})  = (t_k)^k- \sum_{j=0}^{k-2} { {k}\choose{j}} (t_k-t_{j+1})^{k-j} \Psi_{j}(t_1,...,t_j)$
  (see \citep{SW1986}, p. 366-369).
Additionally, the following relation holds (see Lemma 2.1  in \cite{FR2002}): for all $k\in\mathbb{N}$ and $\nu_1,\nu_2\in \R$ such that $0\leq \nu_1+\nu_2\leq  \nu_1+k\nu_2\leq 1,$
\begin{equation}\label{relFinner}
\Psi_k( \nu_1+\nu_2,..., \nu_1+k\nu_2)=( \nu_1+\nu_2)(\nu_1+(k+1)\nu_2)^{k-1}.
\end{equation}

From the $\Psi_k$'s, we define the following useful quantities: for any threshold $\mbf{t}=(t_k)_{1\leq k \leq m}$ and $k\geq 0$, $k\leq m$, we let
\begin{align}
\distsu_m(\mbf{t},k) &=  {m \choose k} (t_k)^k \Psi_{m-k}\big( 1-t_m,...,1-t_{k+1}\big)\label{equ_for_distsu},\\
\distsd_m(\mbf{t},k) &= {m \choose k} (1-t_{k+1})^{m-k} \Psi_{k}\big( t_{1},...,t_{k}\big).\label{equ_for_distsd}
\end{align}
Above, note that $(t_k)^k$ and $(1-t_{k+1})^{m-k}$ are correct when $k=0$ and $k=m$, even if $(t_j)_j$ is only defined for $1\leq j\leq m$. 
Note that Bolshev's recursion provides $\sum_{k=0}^m\distsu_m(\mbf{t},k)=\sum_{k=0}^m\distsd_m(\mbf{t},k)=1$ for any threshold $\mbf{t}$. 



Finally, we will use the so-called Stirling numbers of the second kind, defined as coefficients $\Sti{s}{\l}$ for  $s,\l \geq 1$ by $\Sti{s}{0}=0$, $\Sti{s}{\l}=0$ for $\l>s$,  $\Sti{1}{1}=1$ and the recurrence relation: for all $1\leq \l \leq  s+1$,
$
\Sti{s+1}{\l} = \l \Sti{s}{\l} + \Sti{s}{\l-1}.
$
For instance, $\Sti{3}{1} = 1 $, $\Sti{3}{2} = 3 $, $\Sti{3}{3} = 1 $, $\Sti{4}{1} = 1 $,$\Sti{4}{2} = 7 $, $\Sti{4}{3} = 6 $, $\Sti{4}{4} = 1 $. 
From a combinatorial point of view, the coefficient $\Sti{s}{\l}$ counts the number of ways to partition a set of $s$ elements into $\l$ (nonempty) subsets. The latter is useful to compute the $s$-th moment of a binomial distribution: if $X\sim \mathcal{B}(n,q)$, we have $\forall s \geq 1$, $\E[ X^s]=\sum_{\l=1}^{s\wedge n} \frac{n!}{(n-\l)!} \Sti{s}{\l} q^\l$. 

\section{New formulas}
\subsection{Unconditional independent model, $m\geq 2$}\label{sec:mainresults}

\subsubsection{Step-up case}\label{sec:mainresults:stepup}

Let us consider the unconditional independent  model. Finner and Roters (2002) derived the exact distribution of the rejection number  of any step-up procedure in the case of i.i.d. uniform $p$-values (i.e., when all the hypotheses are true) \cite{FR2002}. 
In the unconditional model, the latter can be generalized as follows: denoting $G(t)=\pi_0 F_0(t)+(1-\pi_0)F_1(t)$ the common c.d.f. of the $p$-values, we have for $0\leq k \leq m$ that
\begin{equation}
 \prob{|\SU(\mbf{t})|=k} = \distsu_m\big( [G(t_{j})]_{1\leq j\leq m},k\big).\label{equ_dist_su}
 \end{equation}
(this is straightforward from  \cite{FR2002} 
because $G$ is continuous increasing). 
 
 Next, for the procedure $R(t)=\{i\telque p_i\leq t\}$ using a \textit{constant} threshold $t\in[0,1]$, Storey (2003) proved that the distribution of $|\cH_0(H)\cap R(t) |$ conditionally on $| R(t) |=k$ is a binomial distribution $\mathcal{B}\big(k, \pi_0 F_0(t)/G(t)\big)$ (see proof of Theorem~1 in  \cite{Storey2003}, see also Proposition 2.1 in \cite{Chi2008}). 
Later, Chi (2007) proved that  the distribution of $|\cH_0(H)\cap \LSU |$ conditionally on $| \LSU |=k$ is asymptotically binomial (in a particular ``supercritical'' framework), see \cite{Chi2007}.
 Here, we show that the latter holds non-asymptotically, for any step-up procedure, which, by using \eqref{equ_dist_su},  gives exact formulas for the c.d.f. of the FDP, the $s$-th moment of the FDP, the FDR and the Power.
 
\begin{theorem}\label{main_indep}
When testing $m\geq 2$ hypotheses, consider  a step-up procedure ${\SU}(\mbf{t})$ with threshold $\mbf{t}$ and the notation of Section~\ref{sec:notation}. Then for any parameter $\pi_0\in [0,1]$ and $F_1\in\mathcal{F}$, denoting $G(t)=\pi_0 F_0(t) + \pi_1 F_1(t)$, we have under the generating distribution ${(H,\mathbf{p})\sim \PExI_{(\pi_0,F_1)}}$ of the unconditional independent model, for any $k\geq 1$,
\begin{align}
|\cH_0(H)\cap \SU(\mbf{t}) |\: \mbox{ conditionally on }\: | \SU(\mbf{t}) |=k \:\:\:\:\sim\:\: \mathcal{B}\bigg(k, \frac{\pi_0 F_0(t_k)}{G(t_k)}\bigg).\label{equ_distrFDP}
\end{align}
In particular, we derive the following formulas, for any $x\in(0,1)$, for any $s\geq 1$, denoting by $\Sti{s}{\l}$ the Stirling number of second kind and by $ \lfloor z \rfloor$ the largest integer smaller than or equal to $z$,
\begin{align}
\P[\FDP(\SU(\mbf{t}),H) \leq x ]&= \sum_{k=0}^{m} \sum_{j=0}^{ \lfloor x k \rfloor} {{k} \choose {j}} \bigg(\frac{\pi_0F_0(t_k)}{G(t_k)}\bigg)^{j} \bigg(\frac{\pi_1F_1(t_k)}{G(t_k)}\bigg)^{k-j}  \distsu_m\big( [G(t_{j})]_{1\leq j\leq m},k\big)
\label{equ_FDP_indep}\\
\E[\FDP(\SU(\mbf{t}),H)^{s} ]&= \sum_{\l=1}^{s \wedge m} \frac{m!}{(m-\l)!}\Sti{s}{\l} \pi_0^\l \sum_{k=\l}^{m} \frac{F_0(t_k)^\l}{k^s}  \: \distsu_{m-\l}\big(  [G(t_{j+\l})]_{1\leq j\leq m-\l} ,k-\l\big)
\label{equ_mom_FDP_indep} \\
\FDR(\SU(\mbf{t}),\PExI_{(\pi_0,F_1)})&=\pi_0 m \sum_{k=1}^{m} \frac{F_0(t_k)}{k}  \: \distsu_{m-1}\big(  [G(t_{j+1})]_{1\leq j\leq m-1} ,k-1\big)\label{equ_FDR_indep} \\
\Pow(\SU(\mbf{t}),\PExI_{(\pi_0,F_1)}) & =  \sum_{k=1}^{m} F_1( t_k)  \: \distsu_{m-1}\big([G(t_{j+1})]_{1\leq j\leq m-1},k-1\big). 
\label{equ_Pow_indep}
\end{align}
\end{theorem}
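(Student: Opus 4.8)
The plan is to prove the conditional binomial law~\eqref{equ_distrFDP} first, and then read off the four explicit formulas by conditioning on the rejection number. For~\eqref{equ_distrFDP}, fix $k\geq 1$ and compare $\SU(\mbf{t})$ with the \emph{constant} threshold $t_k$: for each $i$ define the ``type'' $W_i\in\{0,1,2\}$ by $W_i=0$ if $p_i>t_k$, $W_i=1$ if $p_i\leq t_k$ and $H_i=1$, and $W_i=2$ if $p_i\leq t_k$ and $H_i=0$. Under $\PExI_{(\pi_0,F_1)}$ the pairs $(H_i,p_i)$ are i.i.d., hence the $W_i$ are i.i.d.\ with $\prob{W_i=2}=\pi_0F_0(t_k)$, $\prob{W_i=1}=\pi_1F_1(t_k)$, $\prob{W_i=0}=1-G(t_k)$. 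Writing $A_k=\{|\SU(\mbf{t})|=k\}$, $B_k=\{\#\{i:W_i\neq 0\}=k\}$ and $V=\#\{i:W_i=2\}$, I would record three facts: (i) on $A_k$ one has $\hat k=k$, so $\SU(\mbf{t})=\{i:p_i\leq t_k\}$ and $|\cH_0(H)\cap\SU(\mbf{t})|=V$; (ii) $A_k\subseteq B_k$, because $\hat k=k$ forces $p_{(k)}\leq t_k$ and $p_{(k+1)}>t_{k+1}\geq t_k$, i.e.\ exactly $k$ $p$-values lie in $[0,t_k]$; (iii) on $B_k$, the event $A_k$ equals $\{p_{(k+j)}>t_{k+j},\ j=1,\dots,m-k\}$, which involves only the $m-k$ $p$-values lying in $(t_k,1]$, namely those with $W_i=0$.

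Conditionally on the whole vector $(W_i)_{1\leq i\leq m}$, the $p$-values with $W_i=0$ are i.i.d.\ with law the restriction of $G$ to $(t_k,1]$, independent of everything else; by exchangeability this conditional law depends neither on which indices carry $W_i=0$ nor on the labels $(H_i)$. Hence, by~(iii), $\prob{A_k\mid (W_i)_{1\le i\le m}}=c_k\,\mathbf{1}\{B_k\}$ for a deterministic constant $c_k=\prob{A_k\mid B_k}$. Since $V$ and $B_k$ are functions of $(W_i)$, this gives $\prob{V=j,\,A_k}=c_k\,\prob{V=j,\,B_k}$ and $\prob{A_k}=c_k\,\prob{B_k}$, so the law of $V$ given $A_k$ coincides with the law of $V$ given $B_k$. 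The latter is exactly $\mathcal{B}\big(k,\pi_0F_0(t_k)/G(t_k)\big)$ by the usual multinomial property: conditionally on exactly $k$ of the i.i.d.\ $W_i$ being nonzero, the number equal to $2$ is binomial with parameter $\prob{W_i=2\mid W_i\neq 0}=\pi_0F_0(t_k)/G(t_k)$. Combined with~(i), this proves~\eqref{equ_distrFDP}; continuity of $G$ is used throughout to exclude ties.

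The four formulas then follow by conditioning on $|\SU(\mbf{t})|=k$ and summing over $k$, using $\prob{|\SU(\mbf{t})|=k}=\distsu_m\big([G(t_j)]_{1\le j\le m},k\big)$ from~\eqref{equ_dist_su}. For~\eqref{equ_FDP_indep}: on $\{|\SU(\mbf{t})|=k\}$ with $k\ge 1$ one has $\FDP=V/k$, hence $\{\FDP\le x\}=\{V\le\lfloor xk\rfloor\}$, and one expands the binomial c.d.f.\ (the $k=0$ term contributes $\distsu_m(\cdot,0)$). For~\eqref{equ_mom_FDP_indep}: write $\E[\FDP^s\mid|\SU(\mbf{t})|=k]=k^{-s}\,\E[V^s\mid|\SU(\mbf{t})|=k]$, insert the binomial moment identity $\E[X^s]=\sum_{\ell=1}^{s\wedge k}\frac{k!}{(k-\ell)!}\Sti{s}{\ell}q^\ell$ of Section~\ref{sec:notation} with $q=q_k:=\pi_0F_0(t_k)/G(t_k)$, interchange the sums over $k$ and $\ell$, and use~\eqref{equ_for_distsu} to collapse $\frac{k!}{(k-\ell)!}G(t_k)^{-\ell}\distsu_m([G(t_j)]_j,k)$ into $\frac{m!}{(m-\ell)!}\distsu_{m-\ell}([G(t_{j+\ell})]_{1\le j\le m-\ell},k-\ell)$, the bookkeeping being $\frac{k!}{(k-\ell)!}\binom{m}{k}\big/\binom{m-\ell}{k-\ell}=\frac{m!}{(m-\ell)!}$ together with $(m-\ell)-(k-\ell)=m-k$ (so the $\Psi$-factors match). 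Formula~\eqref{equ_FDR_indep} is the case $s=1$ ($\ell=1$ only, $\Sti{1}{1}=1$). For~\eqref{equ_Pow_indep}: on $\{|\SU(\mbf{t})|=k\}$ one has $|\cH_1(H)\cap\SU(\mbf{t})|=k-V$ with conditional mean $k\,\pi_1F_1(t_k)/G(t_k)$, and the analogous manipulation with $k\binom{m}{k}=m\binom{m-1}{k-1}$ turns $\pi_1\sum_k k\,G(t_k)^{-1}F_1(t_k)\distsu_m([G(t_j)]_j,k)$ into $\pi_1 m\sum_k F_1(t_k)\distsu_{m-1}([G(t_{j+1})]_j,k-1)$; dividing by $\pi_1 m$ gives the stated power.

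The main obstacle is step~(iii): making rigorous that, once exactly $k$ $p$-values are known to lie below $t_k$, the event $\{\hat k=k\}$ is measurable with respect to the $p$-values lying \emph{above} $t_k$ only, and that the conditional law of those is blind to the true/false labels. This decoupling is precisely what upgrades the naive mixture of binomials (which is all one gets by conditioning on the full ordered $p$-value vector) into a single binomial with the claimed parameter; everything else is routine conditioning and the combinatorial identities recalled in Section~\ref{sec:notation}.
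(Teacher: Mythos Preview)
Your proof is correct. Both your argument and the paper's rest on the same structural fact---on $\{|\SU(\mbf{t})|=k\}$ the rejected set is exactly $\{i:p_i\le t_k\}$, and the event $\{\hat k=k\}$ factors as ``exactly $k$ $p$-values lie below $t_k$'' together with a condition on the $m-k$ $p$-values above $t_k$---but you package it differently. The paper computes the joint law $\P(|\cH_0\cap\SU|=j,\,|\SU|=\ell)$ directly: it uses exchangeability to pin down the rejected indices as $\{1,\dots,\ell\}$ and the labels $H_1,\dots,H_\ell$, then invokes Lemma~\ref{lemma_stepup} to rewrite $\{\SU=\{1,\dots,\ell\}\}$ as $\{p_1,\dots,p_\ell\le t_\ell\}\cap\{\hat k'_{(\ell)}=0\}$, and finally factors by independence. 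Your route is more probabilistic: the type variables $W_i$ give an i.i.d.\ multinomial, and the crux is the conditional-independence statement $\P(A_k\mid(W_i))=c_k\mathbf{1}\{B_k\}$, which immediately forces the law of $V$ given $A_k$ to coincide with the law of $V$ given $B_k$, the latter being the textbook multinomial-to-binomial reduction. Your organization avoids writing down Lemma~\ref{lemma_stepup} as a separate device (your steps (ii)--(iii) prove the needed content inline) and makes transparent \emph{why} a single binomial, rather than a mixture, appears; the paper's computation, on the other hand, yields the explicit value of $c_k$ along the way and plugs more directly into the $\distsu$ notation. The derivations of \eqref{equ_FDP_indep}--\eqref{equ_Pow_indep} are then identical in both approaches.
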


We can apply Theorem~\ref{main_indep} in the case where $t_k=\alpha k /m$, to deduce the following results for the LSU procedure of Benjamini and Hochberg (1995) \cite{BH1995}: first,  \eqref{equ_FDR_indep} leads to $\FDR(\LSU)=\pi_0\alpha$,  recovering the well known result of Benjamini and Yekutieli \cite{BY2001} in the unconditional model. 
Second,  \eqref{equ_Pow_indep} provides the exact expression 
$$\Pow(\LSU)  =  \sum_{k=1}^{m} F_1( \alpha k/m)   {{m-1} \choose {k-1}}  (G(\alpha k/m))^{k-1} \Psi_{m-k}\big( 1-G(\alpha m/m),...,1-G(\alpha({k+1})/m)\big).$$
Glueck et al. (2008) have obtained an exact expression for the power of the LSU under independence (in the conditional model)  \cite{Glueck2008}, but the corresponding formula was reported to have a complexity exponential in $m$, which is intractable for large $m$. Here, we obtained a  much less complex formula, requiring (at most) the computation of the function $\Psi_{m}$ at one point of $\mathbb{R}^m$. 
Third,  formula \eqref{equ_mom_FDP_indep} used with $t_k=\alpha k/m$ is similar to expression (2.1) in Theorem~2.1 of \cite{FZ2006}, in which a formula for the $s$-th moment of the FDP of the $\LSU$ was investigated (in the conditional model). Our  formula \eqref{equ_mom_FDP_indep} uses additional factors $\Sti{s}{\l}$. As soon as $s\geq 3$, they are definitely needed and they seem forgotten in \cite{FZ2006}; for instance, taking $\alpha=1$, the corresponding linear step-up procedure rejects all the hypotheses and \eqref{equ_mom_FDP_indep} reduces to the computation of the $s$-th moment of a binomial distribution, which uses at least one $\Sti{s}{\l}> 1$, see Section~\ref{sec:notation}.

Fourth, expression \eqref{equ_FDP_indep} used with $t_k=\alpha k/m$ yields what is to our knowledge the first exact expression for the c.d.f. of $\FDP(\LSU)$, valid for any $m\geq 2$ and for any alternative c.d.f. $F_1$. For instance, taking a typical Gaussian setting where $F_1(t)=\overline{\Phi} \left(  \overline{\Phi}^{-1}(t) - \mu \right)$, we are able to evaluate  the probability $\P(\FDP(\LSU)\leq c \:\alpha)$ for $c\geq 1$; for $\mu=3$, $\alpha=0.05$,  $\pi_0=1-1/\sqrt{m}$, expression \eqref{equ_FDP_indep} provides $\P(\FDP(\LSU)\leq \alpha) \simeq 0.724$, $\P(\FDP(\LSU)\leq 2\alpha) \simeq 0.787  $ for $m=100$ and  $\P(\FDP(\LSU)\leq \alpha) \simeq 0.557$, $\P(\FDP(\LSU)\leq 2\alpha) \simeq 0.826  $ for $m=1000$. 
This means that the LSU procedure, designed to control the FDR at level $\alpha$, can have a FDP larger than $2\alpha$ with a ``non-negligible" probability, for some admittedly quite standard values of the model parameters. 
As $m$ tends to infinity while the model parameters stay constant with $m$, the FDP converges to the FDR and the latter effect vanishes  (see e.g. \cite{Neu2008}). However, we show in Section~\ref{sec:LFCvar} that the convergence can be slow in the sparse case. 
    
  Alternatively, some authors are interested in procedures $R$ controlling the FDP, i.e. satisfying $\P[\FDP(R)\leq \alpha] \geq 1-\gamma$, see e.g. \cite{GW2004,LR2005}. As a matter of fact, by using \eqref{equ_distrFDP}, we directly deduce that the latter FDP control is satisfied by the step-up procedure $\SU(\mbf{t}^\star)$ using the \textit{oracle} threshold defined by $t^\star_{m+1}=1$ (by convention) and for any $1\leq k \leq m$, $$t^\star_k = \max\{ t\in[0,t^\star_{k+1}] \telque \P[X\leq \alpha k]\geq 1-\gamma\mbox{ for } X \sim \mathcal{B}(k, {\pi_0 t}/{G(t)}) \},$$ with $t^\star_k=0$ if the above set is empty. 
  However, the latter threshold is unknown in practice, because it depends on the c.d.f. $G$ and an interesting issue is to estimate it. 
   Chi and Tan (2008) introduced  the threshold $t^{CT}_k = \max\{ t\in[0,t^{CT}_{k+1}] \telque \P[X\leq \alpha k]\geq 1-\gamma\mbox{ for } X \sim \mathcal{B}(k, 1\wedge (m t /k)) \}$ (with the convention $t^{CT}_{m+1}=1$). 
As a matter of fact, the latter can be seen as the empirical substitute of $t^\star_k$, because ${G}(t_{\hat{k}})\simeq \mathbb{G}_m(t_{\hat{k}})=\hat{k}/m$ for any step-up procedure rejecting $\hat{k}$ hypotheses ($\mathbb{G}_m$ denoting the e.c.d.f. of the $p$-values). 
Using the latter threshold, they established an asymptotic FDP control (as $m$ tends to infinity). Here, a plausible explanation is that their procedure correctly mimics the oracle  $\SU(\mbf{t}^\star)$  (asymptotically).


\subsubsection{Step-down case}\label{sec:indep-step-down}

In that section, we still consider the unconditional independent model, but we focus on the step-down case. By contrast with the step-up case, for a step-down procedure, the distribution of $|\cH_0(H)\cap \SD(\mbf{t}) |$  
conditionally on $|\SD(\mbf{t})|=k$ 
is not binomial, in general. For instance, we prove in Section~\ref{preuve_distr_SD}  that for $k\geq 1$, $k\leq m$,
\begin{align}
\P[|\cH_0(H)\cap \SD(\mbf{t}) |=k \telque |\SD(\mbf{t})=k]&=\pi_0^k \frac{\Psi_{k}\big(F_0(t_1),...,F_0(t_k))}{\Psi_{k}\big(G(t_1),...,G(t_k))} =: a^k\label{equ_distr_SD_1}\\
\P[|\cH_0(H)\cap \SD(\mbf{t}) |=0 \telque |\SD(\mbf{t})=k]&=\pi_1^k \frac{\Psi_{k}\big(F_1(t_1),...,F_1(t_k))}{\Psi_{k}\big(G(t_1),...,G(t_k))}=:(1-b)^k,\label{equ_distr_SD_2}
\end{align}
and it turns out that $a=b$ only for particular situations, such as $t_1=...=t_k$, $F_1(x)=x$ or $\pi_0\in\{0,1\}$. 
Also, in the Dirac-uniform configuration $F_1=1$ (an thus $G(t)=\pi_0 t +\pi_1$), we establish in Section~\ref{preuve_distr_SD}  that, for $1\leq j \leq k$,
\begin{align}
\P[|\cH_0(H)\cap \SD(\mbf{t}) |=j \telque |\SD(\mbf{t})=k]&={{k} \choose {j}}\pi_0^{j} \pi_1^{k-j} \frac{\Psi_{j}\big(t_{k-j+1},...,t_k)}{\Psi_{k}\big(\pi_0 t_1+\pi_1,...,\pi_0 t_k +\pi_1)}, \label{equ_distr_SD_3}
\end{align}
which is not binomial in general (see also Remark~\ref{rem-thorsten} below).

An exact expression for $\P[|\cH_0(H)\cap \SD(\mbf{t}) |=j \telque |\SD(\mbf{t})|=k]$ only involving functions of the type $\Psi_i$ and valid for any $j$, $\mbf{t}$ and $F_1$ seems considerably more difficult to derive. As a consequence, we  now only focus on the calculation of the FDR and of the power. For this, 
let us recall the exact formula for the distribution of $|\SD(\mbf{t})|$: for all $k\in \{0,...,m\}$, 
\begin{align}
 \prob{|\SD(\mbf{t})|=k} = \distsd_m\big( [G(t_{j})]_{1\leq j\leq m},k\big).\label{equ_dist_sd}
\end{align}
(see  formula (4) p.344 of \cite{SW1986} and \cite{FR2002} which can be directly generalized in the unconditional independent model because $G$ is continuous increasing). 
The next result, based on the calculation of the distribution of $|\SD(\mbf{t}')|$ conditionally on $|\SD(\mbf{t})|=k$ (with $t'_j= t_{j+1}$),  connects the FDR to distributions of the type \eqref{equ_dist_sd} (see the proof in Section~\ref{proof_equ_jointdist_sd}).
 
\begin{theorem}\label{main_indep_sd}
For $m\geq 2$ hypotheses, consider the unconditional independent model $\PExI_{(\pi_0,F_1)}$,  a step-down procedure ${\SD}(\mbf{t})$  with threshold $\mbf{t}$ and the notation of Section~\ref{sec:notation}. Then for any parameter $\pi_0\in [0,1]$ and $F_1\in\mathcal{F}$ (denoting $G(t)=\pi_0 F_0( t) + \pi_1 F_1(t)$), we have
\begin{align}
\FDR(\SD(\mbf{t}),\PExI_{(\pi_0,F_1)})=&\pi_0  m \sum_{k=1}^{m}  \sum_{k'=k}^m \frac{F_0(t_k)}{k'}  
 \distsd_{m-1}\big((G(t_j))_{1\leq j\leq m-1},k-1\big)\: \nonumber\\
 &\times\distsd_{m-k} \left( \left(\frac{G(t_{k+j})-G(t_{k})}{1-G(t_{k})}\right)_{1\leq j\leq m-k},\:k'-k \right)
,\label{equ_FDR_indep_sd}\\
\Pow(\SD(\mbf{t}),\PExI_{(\pi_0,F_1)}) =&   \sum_{k=1}^{m} F_1( t_k)  \distsd_{m-1}\big( [G(t_{j})]_{1\leq j\leq m-1},k-1\big).\label{equ_Pow_indep_sd}
\end{align}
\end{theorem}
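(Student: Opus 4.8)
The plan is to reduce both quantities to a computation for the step-down sweep run on the $m-1$ other $p$-values, using two structural facts. Denote by $\mbf{q}$ the family $(p_2,\dots,p_m)$ and let $K$ (resp.\ $K'$) be the number of rejections of the step-down procedure applied to $\mbf{q}$ with threshold $(t_1,\dots,t_{m-1})$ (resp.\ $(t_2,\dots,t_m)$); note $K\le K'$ since $\mbf{t}$ is nondecreasing. The two facts are: (i) $1\in\SD(\mbf{t})$ if and only if $p_1\le t_{K+1}$; and (ii) on that event, $|\SD(\mbf{t})|=1+K'$. I would prove them by inserting $p_1$ into the sorted list $q_{(1)}\le\dots\le q_{(m-1)}$ at rank $\l+1$, with $\l=\#\{j:q_j<p_1\}$, and inspecting the sweep of $\mbf{t}$ on $\mbf{p}$: it clears positions $1,\dots,\l$ (the entries $q_{(1)},\dots,q_{(\l)}$ against $t_1,\dots,t_\l$) and then position $\l+1$ ($=p_1$ against $t_{\l+1}$) exactly when $q_{(j)}\le t_j$ for all $j\le\l$ and $p_1\le t_{\l+1}$; a short case check (whose nontrivial direction uses that $p_1\le t_{K+1}$ forces $\l\le K$) shows this conjunction is equivalent to $p_1\le t_{K+1}$. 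On that event the sweep continues past position $\l+1$ on $q_{(\l+1)},\dots,q_{(m-1)}$ against $t_{\l+2},\dots,t_m$, which is precisely the continuation past its $\l$-th step of the sweep of $(t_2,\dots,t_m)$ on $\mbf{q}$; together with the fact that $|\SD(\mbf{t})|$ equals its stopping rank $\max\{k:p_{(j)}\le t_j\ \forall j\le k\}$, this gives (ii).

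Next I would use exchangeability. Since $(H_i,p_i)_i$ are i.i.d.\ under $\PExI_{(\pi_0,F_1)}$ and $\SD(\mbf{t})$ is a symmetric function of the $p$-values, $\FDR(\SD(\mbf{t}))=\pi_0\,m\,\E[\,\ind{1\in\SD(\mbf{t})}/|\SD(\mbf{t})| \mid H_1=0\,]$ and $\Pow(\SD(\mbf{t}))=\P(1\in\SD(\mbf{t})\mid H_1=1)$. Conditionally on $H_1=0$ (resp.\ $=1$), $p_1$ has c.d.f.\ $F_0$ (resp.\ $F_1$), is independent of $\mbf{q}$, and the coordinates of $\mbf{q}$ are i.i.d.\ with c.d.f.\ $G$. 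Using (i)--(ii), the identity $\ind{1\in\SD(\mbf{t})}/|\SD(\mbf{t})|=\ind{p_1\le t_{K+1}}/(1+K')$, and conditioning on $\mbf{q}$ (which carries $K$ and $K'$), I obtain
$$\Pow(\SD(\mbf{t}))=\E_{\mbf{q}}[\,F_1(t_{K+1})\,],\qquad \FDR(\SD(\mbf{t}))=\pi_0\,m\,\E_{\mbf{q}}\!\left[\frac{F_0(t_{K+1})}{1+K'}\right].$$
For the power, expanding over the value of $K$ and using $\P(K=k-1)=\distsd_{m-1}\big((G(t_j))_{1\le j\le m-1},k-1\big)$ — which is \eqref{equ_dist_sd} for the $m-1$ i.i.d.\ variables $\mbf{q}$ — gives \eqref{equ_Pow_indep_sd} at once.

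For the FDR it remains to compute the joint law of $(K,K')$, i.e.\ the distribution of $|\SD(\mbf{t}')|$ conditional on $|\SD(\mbf{t})|=k$ mentioned before the statement. I would note that, for $1\le k\le m$, $\{K=k-1\}$ is the event that exactly $k-1$ of the $q_j$ lie below $t_k$ and that these $k-1$ smallest ones satisfy $q_{(j)}\le t_j$, $j\le k-1$; on it, $q_{(k)},\dots,q_{(m-1)}$ are exactly the $q_j$ above $t_k$. The classical splitting property of i.i.d.\ samples at the level $t_k$ — conditionally on how many fall below $t_k$, the ones below and the ones above form independent i.i.d.\ samples with c.d.f.'s $G(\cdot)/G(t_k)$ and $(G(\cdot)-G(t_k))/(1-G(t_k))$ — together with the $\l=k-1$ case of the analysis in (i)--(ii) (on $\{K=k-1\}$ one has $K'-(k-1)=$ the number of rejections of the step-down sweep of $(t_{k+1},\dots,t_m)$ on $q_{(k)},\dots,q_{(m-1)}$) shows that, conditionally on $\{K=k-1\}$, $K'-(k-1)$ is the rejection count of a step-down procedure on $m-k$ i.i.d.\ uniforms with threshold $\big((G(t_{k+j})-G(t_k))/(1-G(t_k))\big)_{1\le j\le m-k}$, hence distributed as $\distsd_{m-k}$ at that threshold by \eqref{equ_dist_sd}. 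Thus $\P(K=k-1,K'=k'-1)=\distsd_{m-1}\big((G(t_j))_{j\le m-1},k-1\big)\,\distsd_{m-k}\big((\tfrac{G(t_{k+j})-G(t_k)}{1-G(t_k)})_{j\le m-k},\,k'-k\big)$, and substituting into $\FDR(\SD(\mbf{t}))=\pi_0\,m\sum_{k=1}^m\sum_{k'=k}^m\frac{F_0(t_k)}{k'}\,\P(K=k-1,K'=k'-1)$ yields \eqref{equ_FDR_indep_sd}.

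The main obstacle is facts (i)--(ii) of the first paragraph: making it rigorous and transparent that deleting the coordinate $p_1$ decouples the step-down sweep, turning ``$1$ is rejected'' into the plain comparison $p_1\le t_{K+1}$ and simultaneously identifying the overall rejection number with $1+K'$ on that event. Everything downstream is bookkeeping plus the order-statistics splitting lemma and repeated use of \eqref{equ_dist_sd}. One should also dispose of the degenerate thresholds $t_k=1$, where $\P(K=k-1)=0$ unless $k=m$, so that the apparent $0/0$ in $(G(t_{k+j})-G(t_k))/(1-G(t_k))$ never actually arises.
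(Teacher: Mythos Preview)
Your proposal is correct and follows essentially the same approach as the paper. Your facts (i)--(ii) are precisely the content of the paper's Lemma~\ref{lemma_down} (proved there by a short set-theoretic argument rather than your ``insertion'' picture), the exchangeability reduction is identical, and your computation of the joint law of $(K,K')$ via the splitting of an i.i.d.\ sample at level $t_k$ is just a repackaging of the paper's direct calculation of $\P[|\SD(\mbf{t})|=k,|\SD(\mbf{t}')|=k']$ in \eqref{equ_jointdist_sd}.
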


To the best of our knowledge, \eqref{equ_FDR_indep_sd} is the first exact expression for a step-down procedure that relies the FDR to the (joint) distribution of rejection numbers, for any $p$-value alternative distribution. The main tool to get this result is Lemma~\ref{lemma_down}.

One straightforward consequence of \eqref{equ_FDR_indep_sd} is that 
$ \pi_0  m \sum_{k=1}^{m}  \frac{t_k}{k}  \distsd_{m-1}\big( [G(t_{j})]_{1\leq j\leq m-1}
,k-1\big)$ is an upper-bound of $\FDR(\SD(\mbf{t}))$, which in particular proves that $\FDR(\SD(\mbf{t}))$ is always smaller than $\FDR(\SU(\mbf{t}))$, as soon as $t_k/k$ is nondecreasing in $k$. While the latter result should probably be considered as well known, it is not trivial because when increasing the rejection number, both numerator and denominator are increasing within the FDP expression (for further developments on this issue, see Theorem~2 in \cite{Zei2009}). 

To illustrate \eqref{equ_FDR_indep_sd}, we may consider the case where $t_k=\alpha k/m$, that is, we may compute exactly the FDR of the LSD procedure. 
In the Dirac-uniform model $F_1=1$ and using  \eqref{relFinner}, we deduce the following expression:
\begin{align}
\FDR(\LSD,\PExI_{(\pi_0,F_1=1)}) =& \pi_0 \frac{\alpha^2}{m} \left(\pi_1+\pi_0\frac{\alpha}{m}\right)  \sum_{k=1}^{m} \sum_{j=k}^m  \frac{k}{j}  {{m-1}\choose{k-1}}  {{m-k}\choose{j-k}} \pi_0^{m-k}\nonumber \\
&\times \left(\pi_1+\pi_0\frac{\alpha k}{m}\right)^{k-2}  \left(\frac{\alpha (j-k+1)}{m}\right)^{j-k-1} \left(1-\frac{\alpha (j+1)}{m}\right)^{m-j}.\label{equ_upperbound}
\end{align}
Finally, let us emphasize that 
expression \eqref{equ_FDR_indep_sd} is also useful to investigate least favorable configurations for the FDR of step-down procedures (see Section~\ref{appli_indep}).

\begin{remark}\label{rem-thorsten}
If we only focus on the Dirac-uniform configuration, the FDP distribution of a given procedure $R$ (rejecting any $p$-value equals to $0$) only depends on the distribution of $|R\cap \cH_0(H)|$ (conditionally on $H$), because $|R\cap \cH_0(H)| = |R| - m_1(H)$.  As done in Section~3.7 of \cite{Dick2008}, this leads to exact computations of the FDR for step-up, step-down and more general step-up-down procedures,   in the particular Dirac-uniform configuration.
In comparison, our approach is valid for an arbitrary alternative c.d.f. $F_1$, while it intrinsically uses the exchangeability of the $p$-values
(which requires to use an unconditional model). 
\end{remark}

\subsection{Unconditional EMN model with nonnegative correlation, $m\geq 2$}\label{sec:deppossu}

In this subsection, our goal is to obtain results similar to Theorem~\ref{main_indep} and Theorem~\ref{main_indep_sd}, but this time in the unconditional EMN model of parameters $\pi_0$, $\rho$ and $\mu$, with a nonnegative correlation $\rho\in[0,1]$. 
In that case, we easily see that the joint distribution of the $p$-values can be realized as follows: for all $i$,
$p_i= \overline{\Phi} (\sqrt{\rho} \:\:{\overline{\Phi}}^{-1}(U) + \sqrt{1-\rho} \:\:{\overline{\Phi}}^{-1}(U_i) +\mu H_i)$, 
where $U$, $(U_i)_i$ are all i.i.d. uniform on $(0,1)$ (and independent of $(H_i)_i$). This idea can be traced back to Stuart (1958)  \cite{Stuart1958} and Owen and Steck (1962) \cite{OS1962}. As a consequence, conditionally on $U=u$, the $p$-values follow the unconditional independent model of parameters $\pi_0$, $F_0(\cdot\:|\:u,\rho)$ and $F_1(\cdot\:|\:u,\rho)$ where  we let
\begin{align}\label{form_pour_deppos} 
{F}_0(t \:|\:u,\rho) = \overline{\Phi} \left( \frac{{\overline{\Phi}}^{-1}(t) - \sqrt{\rho} \:\:{\overline{\Phi}}^{-1}(u) } {\sqrt{1-\rho}} \right)\mbox{, }
{F}_1(t\:|\:u,\rho) = \overline{\Phi} \left( \frac{{\overline{\Phi}}^{-1}(t) - \sqrt{\rho} \:\:{\overline{\Phi}}^{-1}(u) -\mu} {\sqrt{1-\rho}} \right)
\end{align}
for $\rho\in[0,1)$ and ${F}_0(t \:|\:u,1) =\ind{u\leq t}$, ${F}_1(t \:|\:u,1) =\ind{u\leq {\overline{\Phi}}({\overline{\Phi}}^{-1}(t)-\mu)} $ for $\rho=1$. As a result, to obtain formulas valid in the unconditional EMN model, we may directly use formulas holding in the unconditional independent model (with the above modified c.d.f.'s) and using an additional integration over $u\in(0,1)$. 
Hence, we deduce from Theorem~\ref{main_indep} and Theorem~\ref{main_indep_sd} the following result (the formulas are not fully written for short):

\begin{corollary}\label{main_deppos}\label{main_deppos_sd}
For $m\geq 2$ hypotheses, consider the unconditional EMN model $\PExN_{(\pi_0,\rho,\mu)}$ with parameters  $\pi_0\in[0,1]$, $\mu>0$ and $\rho\in[0,1]$ and let $G(t\:|\:u,\rho)=\pi_0{F}_0(t \:|\:u,\rho)+\pi_1 {F}_1(t \:|\:u,\rho)$  using notation \eqref{form_pour_deppos}. Then, for any threshold $\mbf{t}$, under the generating distribution $(H,\mathbf{p})\sim \PExN_{(\pi_0,\rho,\mu)}$, the quantity $\P[\FDP(\SU(\mbf{t})) \leq x ]$ (resp. $\E[\FDP(\SU(\mbf{t}))^s ]$;  $\FDR(\SU(\mbf{t}))$; $\Pow(\SU(\mbf{t}))$) is given by the RHS of \eqref{equ_FDP_indep} (resp.\eqref{equ_mom_FDP_indep}; \eqref{equ_FDR_indep};  \eqref{equ_Pow_indep}), by replacing $F_0(\cdot)$ by ${F}_0(\cdot \:|\:u,\rho)$, $F_1(\cdot)$ by ${F}_1(\cdot \:|\:u,\rho)$ and $G(\cdot)$ by ${G}(\cdot \:|\:u,\rho)$, and by integrating over $u$ with respect to the Lebesgue measure on $(0,1)$.
Additionally, a similar result holds for step-down procedures using \eqref{equ_FDR_indep_sd} and \eqref{equ_Pow_indep_sd}.
\end{corollary}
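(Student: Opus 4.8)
The plan is to deduce Corollary~\ref{main_deppos} directly from the conditioning representation of the EMN model together with Theorem~\ref{main_indep} and Theorem~\ref{main_indep_sd}. First I would justify the claimed joint representation $p_i = \overline{\Phi}(\sqrt{\rho}\,\overline{\Phi}^{-1}(U) + \sqrt{1-\rho}\,\overline{\Phi}^{-1}(U_i) + \mu H_i)$: writing $X_i = \sqrt{\rho}\, Z + \sqrt{1-\rho}\, Z_i$ with $Z, (Z_i)_i$ i.i.d. standard Gaussian gives a Gaussian vector with unit variances and pairwise covariance $\rho$, hence the stated covariance structure; applying the probability-integral transform $U = \overline{\Phi}(Z)$, $U_i = \overline{\Phi}(Z_i)$ turns these into i.i.d. uniforms on $(0,1)$, independent of $(H_i)_i$. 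This is the representation of Stuart and of Owen--Steck already cited in the text, so only a brief verification is needed.

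The second step is the key observation: conditionally on $U = u$, the variables $(U_i)_i$ are still i.i.d. uniform and independent of $(H_i)_i$, so conditionally on $U=u$ the pair $(H,\mbf{p})$ follows exactly the unconditional independent model $\PExI_{(\pi_0, F_1(\cdot\,|\,u,\rho))}$ with modified null c.d.f. $F_0(\cdot\,|\,u,\rho)$ and alternative c.d.f. $F_1(\cdot\,|\,u,\rho)$ as defined in \eqref{form_pour_deppos}. One checks the marginals: $\prob{p_i \leq t \mid U=u, H_i=0} = \prob{\overline{\Phi}(\sqrt{1-\rho}\,\overline{\Phi}^{-1}(U_i) + \sqrt{\rho}\,\overline{\Phi}^{-1}(u)) \leq t}$, and solving the inequality for $U_i$ (using that $\overline{\Phi}$ is decreasing and $\sqrt{1-\rho}>0$) yields $F_0(t\,|\,u,\rho)$; similarly for $H_i=1$ one gets $F_1(t\,|\,u,\rho)$. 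The limiting cases $\rho = 1$ (where $p_i = \overline{\Phi}(\overline{\Phi}^{-1}(U) + \mu H_i)$ is a deterministic function of $U$ alone, giving the degenerate conditional c.d.f.'s $\ind{u \leq t}$ and $\ind{u\leq \overline{\Phi}(\overline{\Phi}^{-1}(t)-\mu)}$) and $\rho = 0$ (where the model is the unconditional independent model itself) are handled separately but are immediate.

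The third step is to integrate. Since the FDP of any fixed procedure $R$ is a measurable function of $(H,\mbf{p})$, the tower property gives, for the step-up case,
\begin{align*}
\prob{\FDP(\SU(\mbf{t})) \leq x} &= \int_0^1 \prob{\FDP(\SU(\mbf{t})) \leq x \mid U = u}\, du,
\end{align*}
and the inner conditional probability is computed by applying Theorem~\ref{main_indep} in the conditional independent model with parameters $\pi_0$, $F_0(\cdot\,|\,u,\rho)$, $F_1(\cdot\,|\,u,\rho)$, $G(\cdot\,|\,u,\rho)$; this is exactly the RHS of \eqref{equ_FDP_indep} with the substitutions stated in the corollary. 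The same argument, applied to $\E[\FDP(\SU(\mbf{t}))^s \mid U=u]$, $\FDR(\SU(\mbf{t}))$ viewed as $\E[\FDP \mid U=u]$ averaged over $u$, and $\Pow(\SU(\mbf{t}))$, gives \eqref{equ_mom_FDP_indep}, \eqref{equ_FDR_indep} and \eqref{equ_Pow_indep} with the integral over $u$; and applying Theorem~\ref{main_indep_sd} in place of Theorem~\ref{main_indep} gives the step-down analogues of \eqref{equ_FDR_indep_sd} and \eqref{equ_Pow_indep_sd}. Measurability in $u$ of all integrands and the applicability of Fubini/the tower property are routine since every quantity is a bounded measurable function of $(u, H, (U_i)_i)$.

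I do not expect a serious obstacle here; the result is essentially a conditioning device. The only point requiring a little care is the boundary case $\rho = 1$, where the ``conditional independent model'' degenerates (all $p$-values become functions of the single uniform $U$), so one should check that Theorem~\ref{main_indep} and Theorem~\ref{main_indep_sd} still apply with the degenerate c.d.f.'s $F_0(\cdot\,|\,u,1) = \ind{u \leq \cdot}$ — which they do, since those theorems only require $F_1 \in \mathcal{F}$ applied to the relevant conditional model, and here the conditional-on-$U=u$ distributions are still product distributions (in fact Dirac masses) to which the finite combinatorial identities of Section~\ref{sec:notation} apply verbatim. A remark that $\rho \in [0,1)$ is needed so that $\sqrt{1-\rho} > 0$ in \eqref{form_pour_deppos} (with $\rho=1$ treated by the explicit formulas) completes the argument.
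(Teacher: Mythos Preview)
Your proposal is correct and follows exactly the paper's own argument: the text preceding the corollary already establishes the representation $p_i=\overline{\Phi}(\sqrt{\rho}\,\overline{\Phi}^{-1}(U)+\sqrt{1-\rho}\,\overline{\Phi}^{-1}(U_i)+\mu H_i)$, observes that conditionally on $U=u$ the model is the unconditional independent model with the modified c.d.f.'s \eqref{form_pour_deppos}, and then simply integrates the formulas of Theorems~\ref{main_indep} and~\ref{main_indep_sd} over $u$. Your added care about the boundary case $\rho=1$ is reasonable, though the paper effectively sidesteps it by treating $\rho=1$ via direct computation and continuity (Corollary~\ref{cor_regul}) rather than through the conditioning device.
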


In particular, applying Corollary~\ref{main_deppos_sd} for the LSU procedure, we obtain that $\FDR(\LSU,\PExN_{(\pi_0,\rho,\mu)})$ equals:
\begin{equation}
\pi_0  \sum_{k=1}^{m} {{m} \choose {k}} \int_0^1 F_0(\alpha k/m| u,\rho)  G(\alpha k/m| u,\rho)^{k-1} \Psi_{m-k}\big( (1-G(\alpha (m-j+1)/m | u,\rho))_{1\leq j \leq m-k}\big) du.
\label{equ_FDR_posdep_detailed} 
\end{equation}
An expression for $\lim_m \FDR(\LSU,\PExN_{(\pi_0,\rho,\mu)})$ was provided by Finner et al. (2007),  by considering the asymptotic framework where $m$ tends to infinity \cite{FDR2007}. We  compared the latter to the formula \eqref{equ_FDR_posdep_detailed} by plotting the graph corresponding to their Figure~3 (not reported here). The results are qualitatively the same for $\pi_0<1$, but present major differences when $\pi_0=1$ and $\rho$ is small. This is in accordance with the simulations reported in the concluding remarks of Section~5 in \cite{FDR2007}. Hence, the asymptotic analysis may not reflect what happens for a realistically finite $m$, which can be seen as a limitation with respect to our non-asymptotic approach.
As illustration,  when $\pi_0=1$, Finner et al. (2007) proved that   $\lim_{\rho\rightarrow 0}\lim_m  \FDR(\LSU) = \overline{\Phi}(\sqrt{-2\log{\alpha}})<\alpha$ whereas we have $  \lim_{\rho\rightarrow 0} \FDR(\LSU) = \alpha$, as remarked in \cite{FDR2007} using simulations and  as proved formally in the next result.
\begin{corollary}\label{cor_regul}
For any $m\geq 2$ and for any threshold $\mbf{t}$, 
the quantities $\FDR(SU(\mbf{t}), \ol{P}^N_{(\pi_0,\rho,\mu)})$ and $\FDR(\SD(\mbf{t}), \ol{P}^N_{(\pi_0,\rho,\mu)})$ are continuous in any $\pi_0\in[0,1]$, any $\rho\in [0,1]$ and any $\mu>0$. 
\end{corollary}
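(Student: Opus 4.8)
The plan is to read the two quantities off the explicit formulas of Corollary~\ref{main_deppos} and apply dominated convergence. Fix $m$ and a threshold $\mbf{t}$. For both $\SU(\mbf{t})$ and $\SD(\mbf{t})$, the FDR has the form $\int_0^1\phi(u;\pi_0,\rho,\mu)\,du$, where $\phi$ is a finite sum of products of: fixed binomial coefficients, fixed powers, the factors $F_0(t_k\:|\:u,\rho)$, $F_1(t_k\:|\:u,\rho)$ and $G(t_k\:|\:u,\rho)=\pi_0F_0(t_k\:|\:u,\rho)+\pi_1F_1(t_k\:|\:u,\rho)$, polynomial factors in $\pi_0$, and — in the step-down formula \eqref{equ_FDR_indep_sd} — the increments $(G(t_{k+j}\:|\:u,\rho)-G(t_k\:|\:u,\rho))/(1-G(t_k\:|\:u,\rho))$. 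The first observation is that every such increment lies in $[0,1]$ (the numerator is nonnegative since $G(\cdot\:|\:u,\rho)$ is nondecreasing, and is at most $1-G(t_k\:|\:u,\rho)$ since $G\leq 1$, provided $t_m<1$; for $t_m=1$ the formula is read with the natural conventions), and every $\Psi_i\in[0,1]$, so $|\phi(u;\pi_0,\rho,\mu)|\leq C_m$ for a constant $C_m$ depending only on $m$. By dominated convergence it is therefore enough to show that along any parameter sequence $(\pi_0^{(n)},\rho^{(n)},\mu^{(n)})\to(\pi_0,\rho,\mu)$ in $[0,1]\times[0,1]\times(0,\infty)$ one has $\phi(u;\pi_0^{(n)},\rho^{(n)},\mu^{(n)})\to\phi(u;\pi_0,\rho,\mu)$ for Lebesgue-a.e.\ $u\in(0,1)$.

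Since $\phi$ is built from its pieces by addition, multiplication and fixed exponentiation, and is — for fixed $(\rho,\mu)$ — a polynomial in $\pi_0$ with coefficients bounded in $u$, continuity in $\pi_0$ is immediate, and the task reduces to the a.e.\ convergence of $F_0(t_k\:|\:u,\rho^{(n)})$ and $F_1(t_k\:|\:u,\rho^{(n)})$ as $\rho^{(n)},\mu^{(n)}$ vary. Using the conventions $\overline{\Phi}^{-1}(0)=+\infty$, $\overline{\Phi}^{-1}(1)=-\infty$ and the continuous extension of $\overline{\Phi}$ to $[-\infty,+\infty]$, the maps $(\rho,\mu)\mapsto F_0(t\:|\:u,\rho),F_1(t\:|\:u,\rho)$ defined by \eqref{form_pour_deppos} are continuous on $[0,1)\times(0,\infty)$ for every fixed $u\in(0,1)$ (the endpoint cases $t_k\in\{0,1\}$ being absorbed by these conventions), so the only point needing a genuine argument is the limit $\rho^{(n)}\uparrow 1$.

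For this I would note that when $\rho^{(n)}\in[0,1)$ with $\rho^{(n)}\to1$ and $\mu^{(n)}\to\mu$, the denominator $\sqrt{1-\rho^{(n)}}$ tends to $0$ while the numerator in the expression \eqref{form_pour_deppos} for $F_0(t\:|\:u,\rho^{(n)})$ tends to $\overline{\Phi}^{-1}(t)-\overline{\Phi}^{-1}(u)$, which is $<0$ for $u<t$ and $>0$ for $u>t$ by strict monotonicity of $\overline{\Phi}^{-1}$; hence the argument of $\overline{\Phi}$ tends to $-\infty$ on $\{u<t\}$ and to $+\infty$ on $\{u>t\}$, so $F_0(t\:|\:u,\rho^{(n)})\to\ind{u\leq t}=F_0(t\:|\:u,1)$ for every $u\neq t$, i.e.\ for a.e.\ $u$; likewise $F_1(t\:|\:u,\rho^{(n)})\to F_1(t\:|\:u,1)$ for all $u$ off the single point $\overline{\Phi}(\overline{\Phi}^{-1}(t)-\mu)$, and the same Gaussian-tail analysis (now comparing the exponents attached to $t_{k+j}$ and to $t_k$) shows that the step-down increments converge a.e.\ to their $\rho=1$ value. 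Substituting all these a.e.\ limits into $\phi$ yields a.e.\ convergence of the integrand, and dominated convergence then gives the asserted continuity of $\FDR(\SU(\mbf{t}),\PExN_{(\pi_0,\rho,\mu)})$ and $\FDR(\SD(\mbf{t}),\PExN_{(\pi_0,\rho,\mu)})$ in $\pi_0$, $\rho$ and $\mu$.

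I expect the single real obstacle to be exactly this last degeneracy: the conditional c.d.f.'s \eqref{form_pour_deppos} switch from a smooth formula (for $\rho<1$) to indicator functions (at $\rho=1$), so continuity at the endpoint $\rho=1$ is not formal and must be extracted from the pointwise-a.e.\ tail limits above together with the uniform bound $C_m$; the remaining claims are the routine continuity of finite sums and products of continuous, uniformly bounded functions.
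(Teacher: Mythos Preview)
Your proposal is correct and follows essentially the same approach as the paper: represent the FDR via the integral formulas of Corollary~\ref{main_deppos_sd}, observe that the only nontrivial point is $\rho=1$, and handle it by showing that $F_0(t_k\,|\,u,\rho)$ and $F_1(t_k\,|\,u,\rho)$ converge to their indicator limits for every $u$ outside the finite null set $\{t_k:1\le k\le m\}\cup\{\ol{\Phi}(\ol{\Phi}^{-1}(t_k)-\mu):1\le k\le m\}$, then apply dominated convergence. The paper's proof is a two-line sketch of precisely this argument; you have simply spelled out the details (the uniform bound $C_m$, the behavior of the step-down increments) that it leaves implicit.
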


Corollary~\ref{cor_regul} is a straightforward consequence of Corollary~\ref{main_deppos_sd}; 
to prove the continuity in $\rho=1$, we may remark that 
for any $u$ outside the set $\mathcal{S}=\{t_k,1\leq  k\leq m\}\cup\{\ol{\Phi}(\ol{\Phi}^{-1}(t_k)-\mu),1\leq k\leq m\}$ of zero Lebesgue measure, the functions ${F}_0(t \:|\:u,\rho) $ and ${F}_1(t\:|\:u,\rho)$ are continuous in $\rho=1$. 

In particular, Corollary~\ref{cor_regul} shows that the limit of the FDR when $\rho$ tends to $1$ is given by the degenerated case $\rho=1$. In the latter case, the FDR is particularly easy to compute because only one Gaussian variable is effective: for step-up procedures,
$\FDR(\SU(\mbf{t})) = \pi_0 t_m$; 
and for step-down procedures,
$\FDR(\SD(\mbf{t})) =\sum_{k=1}^{m}  {{m} \choose {k}} \pi_0^{k} \pi_1^{m-k} \frac{k}{m}\min\{ t_{m-k+1}, \overline{\Phi} ( {\overline{\Phi}}^{-1}(t_1) -\mu ) \}$ 
(the proof is left to the reader). For instance, under the special $p$-value configuration where $\rho=1$ and $\pi_0=1$, the above FDR expressions yield $\FDR(\SU(\mbf{t}))=t_m$ and $\FDR(\SD(\mbf{t}))=t_1$ . Thus, as $\rho\simeq 1$, the FDR value may considerably change as one considers a step-up or a step-down algorithm.

Going back to Corollary~\ref{main_deppos_sd}, let us mention that the latter can be used in order to evaluate the FDR control robustness under Gaussian equicorrelated positive dependence for any  procedure (step-up or step-down)  that controls the FDR under independence. For instance, the  \textit{adaptive} procedures of Blanchard and Roquain (2008) \cite{BR2008b} (step-up using $t_k= \alpha \min\{1,(1-\alpha)k/(m-k+1)\}$) and Finner et al. (2009) \cite{FDR2009} (step-up based upon $t_k= \alpha k/(m-(1-\alpha)k)$) 
have been proved to control the FDR at level $\alpha$ under independence (asymptotically for \cite{FDR2009}). 
 A simulation study was done in \cite{BR2008b} in order to check if their respective FDR is still below $\alpha$ (or at least close to $\alpha$) in the EMN model. 
Using our exact approach, we are able to reproduce their analysis without 
the errors due to the Monte-Carlo approximation. However, we underline that our approach uses \textit{non-random} thresholds $\mbf{t}$; this is not always the case for adaptive procedures (see e.g. \cite{BKY2006,BR2008b}). 

\subsection{EMN model with  a general correlation and $m=2$}\label{sec:depneg}

When the correlation $\rho$ is negative, the approach presented in the last section is not valid anymore  and the problem seems considerably more difficult to tackle. We propose in this section to focus on the case where only two hypotheses are tested,  which should hopefully give some hints concerning the behavior of the FDR under negative correlations for larger $m$. The next result follows from elementary integration and does not require the use of an unconditional model.

\begin{proposition}\label{th_m2}
For $m = 2$ hypotheses, consider the conditional EMN model  $P^N_{(H,\rho,\mu)}$ with parameters $H=(H_1,H_2)\in\{0,1\}^2$ (generating $m_0\in \{1,2\} $ true null hypotheses), $\rho\in [-1,1]$ and $\mu>0$. Consider a threshold $\mbf{t}=(t_1,t_2)$. Let $z_1=\ol{\Phi}^{-1} (t_2) $ and $ z_2=\ol{\Phi}^{-1} (t_1)$. Then $\FDR(\SU(\mbf{t}),P^N_{(H,\rho,\mu)})$ is given by 
{\small
\begin{equation}
\begin{array}{|l|l|}
\hline
\rho\in(-1,1), m_0=1 & \frac{1}{2}  \int_{0}^{\ol \Phi (z_1-\mu )} \ol \Phi\left(\frac{z_1-\rho \ol{\Phi}^{-1} (w)}{\sqrt{1-\rho^2}}\right) \,dw  +  \int_{\ol \Phi (z_1-\mu )}^{1} \ol  \Phi\left(\frac{z_2-\rho \ol{\Phi}^{-1} (w)}{\sqrt{1-\rho^2}}\right) \,dw\\
\rho\in(-1,1), m_0=2 & t_1 +  \int_{t_1}^{t_2} \ol \Phi\left(\frac{z_1 -\rho \ol{\Phi}^{-1} (w)}{\sqrt{1-\rho^2}}\right) \,dw  +  \int_{t_2}^{1} \ol \Phi\left(\frac{z_2-\rho \ol{\Phi}^{-1} (w)}{\sqrt{1-\rho^2}}\right) \,dw
\\
\hline
\rho=-1, m_0=1 &\left\lbrace
\begin{array}{lll}
t_1 & \mbox{if} & 0 < \mu \le 2 z_1\\
t_1+\frac{1}{2}t_2-\frac{1}{2}\ol{\Phi}(\mu-z_1) & \mbox{if} & 2
z_1< \mu < z_1 +z_2\\
\frac{1}{2}t_2+\frac{1}{2}\ol{\Phi}(\mu-z_1)& \mbox{if} &  \mu \ge  z_1 +z_2
\end{array}
\right.
\\
\rho=-1, m_0=2 & \left\lbrace
\begin{array}{lll}
2 t_1 & \mbox{if} & 1/2 \geq t_2\\
2(t_1+t_2)-1& \mbox{if} & 1/2<t_2, t_1+t_2\leq 1\\
1& \mbox{if} & 1/2<t_2, t_1+t_2> 1
\end{array}
\right.
\\
\hline
\rho=1, m_0=1 & \frac{1}{2} t_2 \\
\rho=1, m_0=2 & t_2 \\
\hline
\end{array}
\label{equ_FDR_SU_m2},
\end{equation}
}
and $\FDR(\SD(\mbf{t}),P^N_{(H,\rho,\mu)})$ is given by
{\small
\begin{equation}
\begin{array}{|l|l|}
\hline
\rho\in(-1,1), m_0=1 & \frac{1}{2} \int_{0}^{\ol \Phi (z_2-\mu  )} \ol \Phi\left(\frac{z_1-\rho \ol{\Phi}^{-1} (w)}{\sqrt{1-\rho^2}}\right) \,dw +\frac{1}{2} \int_{\ol \Phi (z_2-\mu )}^{\ol \Phi (z_1-\mu )} \ol \Phi\left(\frac{z_2-\rho \ol{\Phi}^{-1} (w)}{\sqrt{1-\rho^2}}\right) \,dw \\
&+\int_{\ol \Phi (z_1-\mu )}^{1} \ol \Phi\left(\frac{z_2-\rho \ol{\Phi}^{-1} (w)}{\sqrt{1-\rho^2}}\right) \,dw\\
\rho\in(-1,1), m_0=2 & t_1+\int_{t_1}^{1} \ol \Phi\left(\frac{z_2-\rho \ol{\Phi}^{-1} (w)}{\sqrt{1-\rho^2}}\right) \,dw\\
\hline
\rho=-1, m_0=1 &\left\lbrace
\begin{array}{lll}
t_1 & \mbox{if} & 0 < \mu \le z_1 +z_2\\
\frac{1}{2}(t_1+ t_2) -\frac{1}{2}\ol{\Phi}(\mu-z_2)+\frac{1}{2}\ol{\Phi}(\mu-z_1)  & \mbox{if} & z_1+z_2< \mu < 2z_2\\
\frac{1}{2}t_2+\frac{1}{2}\ol{\Phi}(\mu-z_1)& \mbox{if} &  \mu \ge  2z_2
\end{array}
\right.
\\
\rho=-1, m_0=2 & \min(2t_1,1)
\\
\hline
\rho=1, m_0=1 & \frac{1}{2} \min( t_2, \ol{\Phi}(z_2-\mu) )\\
\rho=1, m_0=2 & t_1 \\
\hline
\end{array}
\label{equ_FDR_SD_m2}.
\end{equation}
}
\end{proposition}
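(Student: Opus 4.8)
The plan is to exploit the fact that with only $m=2$ hypotheses there are very few possible outcomes, so that $\FDR$ reduces to a short case analysis followed by an elementary one-dimensional integration against the law of the Gaussian vector $(X_1,X_2)$. \textbf{Step 1 (finite enumeration).} For $m=2$ and a threshold $\mbf{t}=(t_1,t_2)$ with $t_1\le t_2$, the step-up number $\wh{k}$ equals $2$ on $\{p_{(2)}\le t_2\}$, equals $1$ on $\{p_{(2)}>t_2\}\cap\{p_{(1)}\le t_1\}$, and equals $0$ otherwise; likewise $\wt{k}$ equals $2$ on $\{p_{(1)}\le t_1\}\cap\{p_{(2)}\le t_2\}$, equals $1$ on $\{p_{(1)}\le t_1\}\cap\{p_{(2)}>t_2\}$, and $0$ on $\{p_{(1)}>t_1\}$. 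In each case the rejected set is $\{i:p_i\le t_{\wh{k}}\}$ (resp. $t_{\wt{k}}$), hence fully determined, and $\FDP(R,H)$ equals $0$, $1$, or $1/2$ according to whether $R$ meets $\cH_0(H)$ in $0$, $1$ or $2$ indices. Since swapping the two coordinates preserves the law of $(X_i)$ in the conditional EMN model, it suffices to treat $m_0=2$ (then $\mu$ is irrelevant) and $m_0=1$ with, say, $H_1=0$, $H_2=1$.

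\textbf{Step 2 (rewriting $\FDR$) and Step 3 (integration for $\rho\in(-1,1)$).} When $m_0=2$ one has $\FDP=\ind{|R|\ge1}$, so $\FDR=1-\prob{\wh{k}=0}$ for $\SU$ and $\FDR=\prob{p_{(1)}\le t_1}$ for $\SD$; when $m_0=1$ with $H_1=0,H_2=1$ one gets $\FDR=\prob{R=\{1\}}+\tfrac12\prob{R=\{1,2\}}$. Combining with Step~1, each of these is a finite union of events $\{p_1\in J_1,\,p_2\in J_2\}$ with $J_1,J_2$ intervals with endpoints among $0,t_1,t_2,1$. To evaluate them, condition on the quantile level $w=\ol{\Phi}(X_j)$ of a well-chosen coordinate $j$ (the unique null coordinate when $m_0=1$, either one when $m_0=2$): $w$ is uniform on $(0,1)$, conditionally the other Gaussian is $\mathcal{N}(\rho\,\ol{\Phi}^{-1}(w),1-\rho^2)$, hence the conditional probability that the other $p$-value is $\le t$ equals $\ol{\Phi}\big((\ol{\Phi}^{-1}(t)-\rho\,\ol{\Phi}^{-1}(w))/\sqrt{1-\rho^2}\big)$, which with $z_1=\ol{\Phi}^{-1}(t_2)$ and $z_2=\ol{\Phi}^{-1}(t_1)$ is exactly the integrand of the displayed formulas. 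The $p$-value of the conditioned coordinate is a monotone function of $w$, and the breakpoints of the integration range are precisely the levels at which it crosses $t_1$ or $t_2$: these are $w=t_1,t_2$ in the $m_0=2$ case, and $w=\ol{\Phi}(z_1-\mu)$ (and, for $\SD$, also $w=\ol{\Phi}(z_2-\mu)$) in the $m_0=1$ case, since there the conditioned coordinate is the non-null one and its $p$-value crosses $t_i$ at $w=\ol{\Phi}(\ol{\Phi}^{-1}(t_i)-\mu)$. On each piece, substituting the values of $\wh{k}$ (or $\wt{k}$) from Step~1 yields one of the stated integrals.

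\textbf{Step 4 (degenerate correlations).} For $\rho=1$ one has $X_1=X_2=X$, so on a non-null coordinate $p=\ol{\Phi}(X+\mu)$ is a deterministic decreasing function of the other $p$-value; both $\wh{k}$ and $\wt{k}$ then depend on $X$ only, and a one-dimensional computation gives the closed forms (e.g. $\FDR(\SU(\mbf{t}))=\tfrac12 t_2$ for $m_0=1$, $=t_2$ for $m_0=2$). For $\rho=-1$ one has $X_2=-X_1$; only $X_1$ is random, but the reflection together with the shift $+\mu$ on the non-null coordinate splits the real line into several regions, with breakpoints governed by the position of $\mu$ relative to $2z_1$, $z_1+z_2$ and $2z_2$, whence the piecewise expressions. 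One also checks, via dominated convergence away from the Lebesgue-null set $\{t_1,t_2\}\cup\{\ol{\Phi}(\ol{\Phi}^{-1}(t_i)-\mu):i=1,2\}$, that these expressions are the pointwise limits of the $\rho\in(-1,1)$ formulas, consistently with Corollary~\ref{cor_regul}.

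\textbf{Main obstacle.} No single computation is hard; the genuine difficulty is the bookkeeping, namely keeping track — across all combinations of ($\SU$ vs. $\SD$) $\times$ ($m_0=1$ vs. $m_0=2$) $\times$ ($|\rho|<1$, $\rho=1$, $\rho=-1$) — of which threshold is active on each piece and where the integration domain breaks. The two most delicate entries are the $\SD$, $m_0=1$ case (three pieces, because the extra breakpoint $w=\ol{\Phi}(z_2-\mu)$ where $p_2$ crosses $t_1$ changes the active threshold for $p_1$) and the two $\rho=-1$, $m_0=1$ sub-tables, where the interplay of the reflection $X_2=-X_1$ and the mean shift $\mu$ must be resolved into the correct $\mu$-intervals; everything else is routine integration of $\ol{\Phi}$.
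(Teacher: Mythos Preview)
Your proposal is correct and follows essentially the same route as the paper: enumerate the three possible values of the FDP, identify the corresponding regions in the plane of the two test statistics, and reduce the bivariate Gaussian integral to a one-dimensional one by conditioning on one coordinate (equivalently, the paper's affine change of variables $u=(y_1-\rho(y_2-\mu))/\sqrt{1-\rho^2}$, $v=y_2-\mu$ that decorrelates the pair), followed by the substitution $w=\ol{\Phi}(v)$; the remaining cases and the degenerate correlations are indeed left to the reader in the paper as well.

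One small slip to fix: in Step~2--3 you first say that for $m_0=1$ you condition on ``the unique \emph{null} coordinate'', but a few lines later you (correctly) write ``the conditioned coordinate is the \emph{non-null} one''. Only the latter is consistent with the integrand $\ol{\Phi}\big((z_i-\rho\,\ol{\Phi}^{-1}(w))/\sqrt{1-\rho^2}\big)$ carrying no $\mu$ and with your stated breakpoints $w=\ol{\Phi}(z_i-\mu)$; just correct the first sentence accordingly.
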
\vspace{0.5cm}

\section{Application to least/most favorable configurations}\label{appliLFC}

\subsection{Least favorable configurations for the FDR}\label{sec:LFCFDR}

In order to study the FDR control, an interesting multiple testing issue is to determine which are the values of the model parameter $F_1$ (or $\mu$) for which the FDR is maximum. The latter is called a least favorable configuration (LFC) for the FDR. 

\subsubsection{Independent model} \label{appli_indep}

Let us focus on the unconditional independent model.
For a \textit{step-up} procedure, expression \eqref{equ_FDR_indep} can be seen as 
$\pi_0 m \E_{0}[ t_{\hat{k}}/\hat{k} ]$ (where $\hat{k}=|\SU([G(t_{j+1})]_{1\leq j\leq m-1})|+1$ and $\E_0$ denotes the expectation with respect to $m-1$ i.i.d. uniform $p$-values). This shows that the behavior of the function $k\mapsto t_k/k$ is crucial to determine the LFC's of the FDR. Namely, if $F_1$ and $F_1'$ are two c.d.f.'s such that for all $t\in[0,1]$, $F_1(t)\leq F_1'(t)$, then we have $\FDR(\SU(\mbf{t}),\PExI_{(\pi_0,F_1)}) \leq \FDR(\SU(\mbf{t}),\PExI_{(\pi_0,F'_1)})$ when  $t_k/k$ is nondecreasing, while we have $\FDR(\SU(\mbf{t}),\PExI_{(\pi_0,F_1)})  \geq $ $\FDR(\SU(\mbf{t}),\PExI_{(\pi_0,F'_1)})$ when  $t_k/k$ is nonincreasing. The latter recovers a well known result of Benjamini and Yekutieli (2001) which was initially established in the conditional model, see Theorem~5.3 in
\cite{BY2001}.
As a consequence, the LFC for the FDR is either $F_1=1$ (Dirac-uniform)  when  $t_k/k$ is nondecreasing, or $F_1=0$ ($F_1(x)=x$ if we only look at concave c.d.f.'s) when  $t_k/k$ is nonincreasing. 
In the border case of a linear threshold, the FDR  does not depend on $F_1$ (e.g. it is equal to $\pi_0 \alpha$ for the LSU), hence any configuration is a LFC.


An open problem is to determine the LFC's of a  \textit{step-down} procedure using a given threshold $\mbf{t}=(t_j)_{1\leq j \leq m}$. 
Here, we introduce a new condition on the threshold $\mbf{t}$ which provides that the Dirac-uniform configuration is  a LFC for the FDR of the corresponding \textit{step-down} procedure.
We define for any threshold $\mbf{t}=(t_j)_{1\leq j \leq m}$ the following condition:
\begin{align}
k\in\{1,...,m\} \mapsto \sum_{i=0}^{m-k} \frac{t_k}{k+i}
\distsd_{m-k} \left( \left(\frac{t_{k+j}-t_k}{1-t_k}\right)_{1\leq
    j\leq m-k},i \right)\mbox { is nondecreasing}
     \label{fun_croiss} \tag{$A$}.
\end{align}
We now present the main result of this section, which uses Theorem~\ref{main_indep_sd} and is proved in Section~\ref{proof_appli_indep_sd}.

\begin{theorem}\label{appli_indep_sd}
For $m\geq 2$ hypotheses, consider the unconditional independent model $\PExI_{(\pi_0,F_1)}$ and a step-down procedure ${\SD(\mbf{t})}$ with a threshold $\mbf{t}=(t_j)_{1\leq j \leq m}$ satisfying \eqref{fun_croiss}. 
Then for any $\pi_0\in [0,1]$ and concave c.d.f. $F_1\in\mathcal{F}$, we have
\begin{align}
\FDR(\SD(\mbf{t}),\PExI_{(\pi_0,F_1)})&\leq \FDR(\SD(\mbf{t}),\PExI_{(\pi_0,F_1=1)})\label{equ_LFC_LSD},
\end{align}
meaning that the Dirac-uniform distribution is a least favorable configuration for the FDR of $\SD(\mbf{t})$. Moreover, for $\alpha\in (0,1)$, the linear threshold $\mbf{t}=(\alpha j/m)_{1\leq j\leq m}$ satisfies  \eqref{fun_croiss} and thus \eqref{equ_LFC_LSD} holds for the linear step-down procedure LSD.
\end{theorem}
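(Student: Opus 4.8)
The starting point is the exact formula \eqref{equ_FDR_indep_sd} from Theorem~\ref{main_indep_sd}. Rewriting it with the notation $G=\pi_0 F_0+\pi_1 F_1$ and pulling out the inner double sum, I would express
$\FDR(\SD(\mbf{t}),\PExI_{(\pi_0,F_1)})$ as $\pi_0 m\,\E_0\!\big[\,\Phi_{F_1}(\hat{k})\,\big]$, where $\hat{k}=|\SD([G(t_j)]_{1\le j\le m-1})|+1$, the expectation $\E_0$ is over $m-1$ i.i.d.\ $p$-values with c.d.f.\ $G$, and for $k\in\{1,\dots,m\}$,
\[
\Phi_{F_1}(k) \;=\; \sum_{i=0}^{m-k} \frac{F_0(t_k)}{k+i}\,
\distsd_{m-k}\!\left( \left(\tfrac{G(t_{k+j})-G(t_k)}{1-G(t_k)}\right)_{1\le j\le m-k},\, i\right).
\]
Since $F_0(t_k)=t_k$, condition \eqref{fun_croiss} says precisely that $\Phi_{1}$ (the case $G(t)=t$, i.e.\ $F_1(x)=x$, which corresponds to the \emph{denominator} structure) is nondecreasing — wait, more carefully: \eqref{fun_croiss} is about the function with $G(t_{k+j})$ replaced by $t_{k+j}$, so it is exactly $\Phi_{F_1}$ evaluated in the \emph{uniform} inner configuration. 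So the proof needs two ingredients: (i) a \emph{stochastic-domination} statement for the distribution of $\hat{k}$ as $F_1$ increases, and (ii) a monotonicity-and-pointwise-comparison statement for $\Phi_{F_1}(k)$ itself.

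For (i), I would use the classical fact that if $F_1\le F_1'$ pointwise then $G\le G'$ pointwise, hence $G(t_j)\le G'(t_j)$ for every $j$, and the step-down rejection number is \emph{stochastically increasing} in the threshold applied to i.i.d.\ uniforms — equivalently, increasing the c.d.f.\ of the $p$-values stochastically increases $|\SD|$. This is a standard coupling argument (put all $p$-values on a common uniform scale; a larger c.d.f.\ means every ordered $p$-value is smaller, so the step-down stopping index can only grow). Thus $\hat{k}$ under $F_1'$ stochastically dominates $\hat{k}$ under $F_1$. The catch is that $\E_0[\Phi_{F_1}(\hat{k})]$ mixes two dependencies on $F_1$: the law of $\hat{k}$ \emph{and} the integrand $\Phi_{F_1}$. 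The clean way is to go in two steps: first show $\FDR$ under $(\pi_0,F_1)$ is at most $\FDR$ under $(\pi_0,F_1'=1)$ by (a) replacing $F_1$ by $1$ inside $\Phi$ only, bounding each $\Phi_{F_1}(k)$ from above — here I'd use that $\Phi_{F_1}(k)$ is \emph{monotone in the inner configuration}: replacing $(G(t_{k+j}))_j$ by the larger $(t_{k+j}+$ something$)$... actually the Dirac-uniform inner c.d.f.\ $\pi_0 t+\pi_1\ge t$ dominates, and one checks $\distsd$ weighted by $1/(k+i)$ behaves monotonically — then (b) using the domination of $\hat k$ from (i) together with the now-fixed nondecreasing function $\Phi_{1}$, which is exactly where hypothesis \eqref{fun_croiss} is consumed: $\E[\,\Phi_1(\hat k)\,]$ increases when $\hat k$ is replaced by the stochastically larger $\hat k'$ under $F_1=1$, precisely because $\Phi_1$ is nondecreasing.

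Putting it together: $\FDR(\SD(\mbf t),\PExI_{(\pi_0,F_1)}) = \pi_0 m\,\E_0^{F_1}[\Phi_{F_1}(\hat k)] \le \pi_0 m\,\E_0^{F_1}[\Phi_{F_1=1}(\hat k)] \le \pi_0 m\,\E_0^{F_1=1}[\Phi_{F_1=1}(\hat k)] = \FDR(\SD(\mbf t),\PExI_{(\pi_0,F_1=1)})$, which is \eqref{equ_LFC_LSD}. For the second assertion, with $t_k=\alpha k/m$ the inner increments $\tfrac{t_{k+j}-t_k}{1-t_k}$ are of the affine form $\nu_1+j\nu_2$ with $\nu_1=\nu_2=\tfrac{\alpha/m}{1-\alpha k/m}$, so by the Finner--Roters identity \eqref{relFinner} each $\distsd_{m-k}(\cdots,i)$ is explicit and the sum $\sum_{i=0}^{m-k}\tfrac{1}{k+i}\distsd_{m-k}(\cdots,i)$ telescopes to a closed form; one then checks directly that $k\mapsto \tfrac{\alpha k}{m}\cdot(\text{that closed form})$ is nondecreasing in $k$, verifying \eqref{fun_croiss}, hence \eqref{equ_LFC_LSD} applies to LSD.

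\textbf{Main obstacle.} The delicate point is step (a): showing $\Phi_{F_1}(k)\le \Phi_{1}(k)$ for each $k$ — i.e.\ that replacing the inner alternative by Dirac-uniform only increases the weighted sum $\sum_i \tfrac{1}{k+i}\distsd_{m-k}(\text{inner config},i)$. The weights $1/(k+i)$ are \emph{decreasing} in $i$, so one cannot simply invoke that $\distsd$ shifts its mass to larger $i$; one needs that the map is in fact monotone after accounting for both effects. The right tool is again a stochastic-domination/coupling argument at the level of the inner step-down on the conditional $p$-values, combined with the observation that $\sum_i \tfrac{1}{k+i}\distsd_{m-k}(\mathbf{s},i)$ can be rewritten (via the same trick as in the main theorem) as $(k)\,\E[\tfrac{1}{k+\hat\jmath}]$ for a suitable rejection count $\hat\jmath$ that is \emph{stochastically decreasing} in the inner c.d.f.\ when restricted appropriately — making the whole expression monotone in the desired direction. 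Getting the direction of this inner domination right, and verifying it interacts correctly with concavity of $F_1$ (which guarantees $F_1(t)\le$ the relevant linear comparison on $[0,1]$, hence $F_1\le 1$ in the Dirac-uniform sense at the level of conditional laws), is the technical heart of the argument.
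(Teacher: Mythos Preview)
Your two-step skeleton --- bound the inner weighted sum pointwise in $k$, then invoke \eqref{fun_croiss} together with stochastic monotonicity of the outer step-down rejection number in the $p$-value c.d.f. --- is exactly the paper's strategy. But step~(a) as you describe it has the direction of the inner comparison backwards, and this is precisely where concavity of $F_1$ is consumed; it is not a side condition to be ``verified'' at the end.

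Write $G_1(t)=\pi_0 t+\pi_1$ for the Dirac-uniform mixture. The inner thresholds under $F_1=1$ simplify to $(G_1(t_{k+j})-G_1(t_k))/(1-G_1(t_k))=(t_{k+j}-t_k)/(1-t_k)$, which is the function appearing in \eqref{fun_croiss}. The key inequality, valid for $t\le t'$ \emph{because $F_1$ is concave} (the secant slope on $[t,t']$ dominates that on $[t,1]$, i.e.\ $(F_1(t')-F_1(t))/(t'-t)\ge(1-F_1(t))/(1-t)$), is
\[
\frac{G(t')-G(t)}{1-G(t)}\;\ge\;\frac{t'-t}{1-t}\;=\;\frac{G_1(t')-G_1(t)}{1-G_1(t)}.
\]
Hence the inner step-down under a general concave $F_1$ uses \emph{larger} thresholds than under Dirac-uniform, so its rejection count $I$ is stochastically \emph{larger}; since $i\mapsto 1/(k+i)$ is decreasing, $\E[t_k/(k+I)]$ is \emph{smaller}, giving $\Phi_{F_1}(k)\le \Phi_{F_1=1}(k)$ directly. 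Your worry that ``one cannot simply invoke that $\distsd$ shifts its mass to larger $i$'' is misplaced: that shift is exactly what is needed, but it occurs in the general-$F_1$ inner model, not in the Dirac-uniform one. The naive comparison $G_1\ge G$ you invoke is true but irrelevant for the \emph{relative} increments; without concavity those can go either way and the inequality fails. After step~(a), the outer bound does use $G\le G_1$ (this is where $F_1\le 1$ enters) together with \eqref{fun_croiss}, as you say.

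For the linear threshold, using \eqref{relFinner} to make each $\distsd_{m-k}(\cdot,i)$ explicit is fine, but the sum $\sum_{i=0}^{m-k}\tfrac{1}{k+i}\,\distsd_{m-k}(\cdot,i)$ does \emph{not} telescope to a closed form from which monotonicity is ``checked directly''. The paper instead derives a recursion $S_{m,k}=a_{m,k}+b_{m,k}\,S_{m-1,k}$ with explicit coefficients ($a_{m,k}$ increasing in $k$, $b_{m,k}$ decreasing in $k$, $a_{m,k}+b_{m,k}$ increasing in $k$) and proves ``$k\mapsto S_{m,k}$ nondecreasing'' by induction on $m$, using $S_{m-1,k}\le 1$. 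This inductive argument, not a closed-form evaluation, is the actual content of verifying \eqref{fun_croiss} for the LSD.
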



While condition \eqref{fun_croiss}  may be somehow difficult to state formally, 
it is very easy to check numerically because it only involves a finite set of real numbers. For instance, considering the threshold of Gavrilov et al. (2009)   $t_k=\alpha k /(m+1 -(1-\alpha)k)$ (for which the step-down procedure controls the FDR, see \cite{GBS2009}), we may see that $(t_k)_k$ satisfies \eqref{fun_croiss} for each $(\alpha,m) \in \{0.01,0.05,0.1,0.2,0.5,0.9\}\times\{5,10,50,100\}$, for instance.  In fact, we were not able to find a value of $(\alpha,m)$ for which the corresponding threshold does not satisfy \eqref{fun_croiss} (unfortunately, we have yet no formal argument proving \eqref{fun_croiss}   for any value of $(\alpha,m)$). 
As a consequence, Theorem~\ref{appli_indep_sd} states that the LFC for the procedure of Gavrilov et al. (2009)  is still the Dirac-uniform configuration (over the class of concave c.d.f.'s), at least for the previously listed values of $(\alpha,m)$, which is a new interesting finding.

In comparison with the step-up case, for which the standard condition ``$k\mapsto t_k/k$ nondecreasing'' provides that the Dirac-uniform configuration is a LFC, the new sufficient condition \eqref{fun_croiss} in the step-down case may be written as `` $k \mapsto t_k/k \:.\:\Psi(\mathbf{t},k)$ is nondecreasing'', where $\Psi(\mathbf{t},k)=\sum_{i=0}^{m-k} \frac{k}{k+i} \distsd_{m-k} \left( \left(\frac{t_{k+j}-t_k}{1-t_k}\right)_{1\leq    j\leq m-k},i \right)$. It turns out that the additional function $\Psi$ has a quite complex behavior, not necessarily connected to the one of $t_k/k$, so that there is no general relation between \eqref{fun_croiss} and ``$k\mapsto t_k/k$ nondecreasing''; for instance,  on the one hand, \eqref{fun_croiss} does not hold for 
the piecewise linear threshold defined by $t_k=\alpha p k /m$ for $1\leq k \leq a$ and  by $t_k= \alpha (p a-m)  ((a-m) m)^{-1} k  + \alpha(1- (p a -m)(a-m)^{-1})$ for $a+1\leq k\leq m$
(using e.g. $m=50$, $p=0.6$, $a=4$, $\alpha=0.5$), while $t_k/k$ is nondecreasing. On the other hand, \eqref{fun_croiss} holds for $t_k=0.9\:(k/m)^{9/10}$ (using e.g. $m=50$) while $t_k/k$ is decreasing. 
In particular, for the latter threshold and considering only the set of concave c.d.f.'s, a LFC for $\FDR(\SU(\mbf{t}))$ is $F_1(x)=x$ while a LFC for $\FDR(\SD(\mbf{t}))$ is $F_1=1$ (and we checked numerically using Gaussian models that $F_1(x)=x$ is not a LFC for  $\FDR(\SD(\mbf{t}))$\,). This puts forward the complexity of the issue: whether we consider a step-up or a step-down procedure, the LFC's for the FDR may be different for some thresholds (e.g. $t_k=0.9\:(k/m)^{9/10}$, $m=50$) and they may coincide for some other thresholds (e.g. the one of \cite{GBS2009} for suitable $(\alpha,m)$). 

\subsubsection{EMN model} \label{appli_dep}

When the $p$-values follow the EMN model, Finner et al. (2007) conjectured that a LFC for the FDR of the LSU is still the Dirac-uniform distribution  (see Section~1 in \cite{FDR2007}). Here, we support this conjecture when $\rho\geq 0$ but we disprove it when $\rho<0$.

In order to investigate this issue, we reported on Figure~\ref{DUpasPireCas} the FDR of the LSU procedure against $\mu$ in the EMN model when $\rho> 0$ (left) and when $\rho<0$ (right), by using Corollary~\ref{main_deppos} and Proposition~\ref{th_m2}.
\begin{figure}[h!]
\begin{center}
\includegraphics[width=0.4\textwidth,angle=-90]{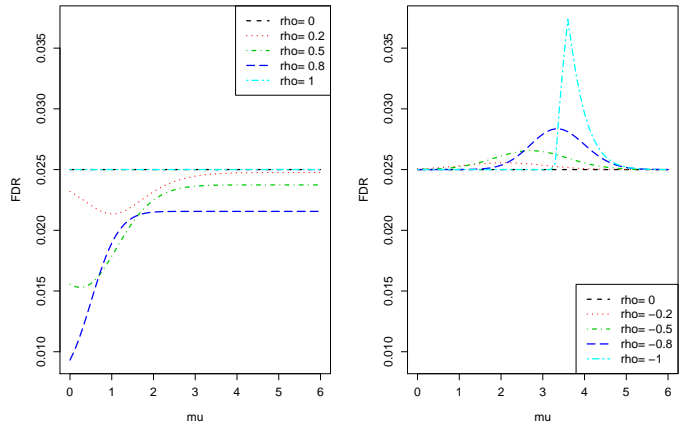}
\end{center}
\caption{FDR(LSU) against the mean $\mu$. Left: $\rho\geq 0$ unconditional EMN model $m=100$ and $\pi_0=0.5$. Right: $\rho<0$ conditional EMN model with $m=2$ and $m_0=1$. $\alpha=0.05$.}  \label{DUpasPireCas}
\end{figure}
Under positive correlation, although each curve is not necessarily nondecreasing (e.g. for $\rho=0.2$), the case $\mu=\infty$, close to the right most point of Figure~\ref{DUpasPireCas}, seems to be a LFC. A challenging problem  is to state the latter formally. 
Under negative correlation and $m=2$, however,  $\mu=\infty$ is not a LFC anymore. As a matter of fact,  in the case where $m=2$, $m_0=1$ and $\mu=\infty$, the two $p$-values are independent (one $p$-value equals $0$), so that the FDR equals $\alpha/2=\alpha m_0/m$ which is not a maximum for the FDR, as we will show below.  

Qualitatively, we observed the same behavior concerning the FDR of the LSD procedure.\\

Under negative correlation,  the Dirac-uniform is not a LFC for the FDR and we can therefore legitimately ask what are the LFC's in that case. Here, we propose to solve this problem when $m=2$ in the conditional EMN model. Let $z_1=\ol{\Phi}^{-1}(\alpha)$ and $z_2=\ol{\Phi}^{-1}(\alpha/2)$ and first consider the LSU procedure. Its FDR is plotted in Figure~\ref{negcorr} (top). When $m_0=1$, we can check that $(\rho,\mu)=(-1,z_1+z_2)$ is a LFC because applying \eqref{equ_FDR_SU_m2}, the corresponding FDR is $3\alpha/4$, which equals the Benjamini-Yekutieli's (BY) upper-bound $(1+1/2+...+1/m)\alpha m_0/m$ \cite{BY2001} (valid under any dependency). Interestingly, in general, Guo and Rao (2008)  states that the BY bound can be fulfilled using very specific dependency structures between the $p$-values (not necessarily including those coming from a EMN model) \cite{GR2008}. Here, we remark that the maximum value of the FDR   still equals the BY bound for $m_0=1$, even for $p$-values coming from a EMN model. 
Next, for $m_0=2$, we may differentiate the corresponding expression in \eqref{equ_FDR_SU_m2} in order to obtain that, assuming $\alpha\leq 1/2$,  the FDR (that does not depend on $\mu$) attains its maximum in $\rho=(-z_1(z_1- z_2) - \{(z^2_1 - z_1 z_2)^2 + 2  \log(2) (z^2_1 - z^2_2) +4 \log(2)\}^{1/2})/(2\log(2))\in(-1,0)$.

Second, we consider the LSD procedure, whose FDR is plotted in Figure~\ref{negcorr} (bottom). In the case where $m_0=2$, by differentiating 
the corresponding expression in \eqref{equ_FDR_SU_m2} (that does not depend on $\mu$), 
we are able to state that the FDR attains its maximum at $\rho=-1$ and that the value of the maximum is $\alpha$. In particular, the FDR of the LSD procedure is always smaller than or equal to $\alpha$ when $m=2$, in the conditional EMN model (for any $m_0$) and thus also in the unconditional EMN model, even for a negative correlation. An interesting open problem is to know whether this holds for larger $m$. 

\begin{figure}[h!]
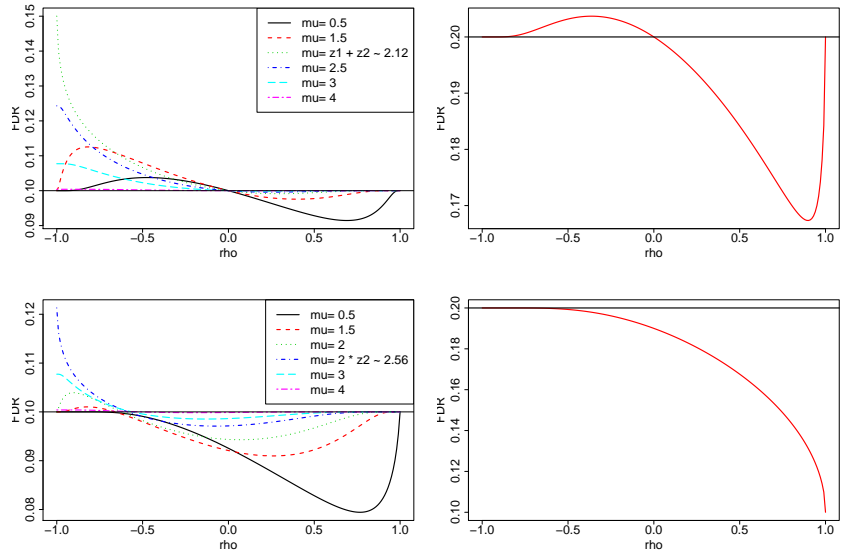

\begin{center}
\includegraphics[width=0.24\textwidth,angle=-90]{FDRfonctionrho.SU.Cas1.m2.ps}
\includegraphics[width=0.24\textwidth,angle=-90]{FDRfonctionrho.SU.Cas0.m2.ps}\\
\includegraphics[width=0.24\textwidth,angle=-90]{FDRfonctionrho.SD.Cas1.m2.ps}
\includegraphics[width=0.24\textwidth,angle=-90]{FDRfonctionrho.SD.Cas0.m2.ps}
\end{center}
\caption{FDR against the covariance $-1\leq \rho \leq 1$ in the conditional EMN model. $m=2$; $\alpha=0.2$. Left: $m_0=1$, right: $m_0=2$. \label{negcorr}Top: LSU, bottom: LSD.}
\end{figure}

\begin{remark}
Reiner-Benaim (2007) also studied the value of $\mu$ maximizing the FDR in the case $m=2$ in the (conditional) EMN model with possibly negative correlation \cite{Reiner2007}. The latter work focused on the two-sided testing with $\rho\in\{-1,1\}$, $m_0=1$ and $m=2$. 
\end{remark}

\subsection{Least/most favorable configurations for the variance of the FDP}\label{sec:LFCvar}

We focus here on the unconditional independent model and on the LSU procedure.
Using \eqref{equ_mom_FDP_indep} with $s=2$, we easily derive the following expression for the variance of the FDP:  
\begin{align}
\V[\FDP(\LSU) ]  &= \alpha \pi_0 \sum_{k=1}^{m} \frac{1}{k}  \distsu_{m-1}\big(  [G(\alpha (j+1)/m)]_{1\leq j\leq m-1} ,k-1\big) - (\alpha\pi_0)^2 /m.\label{equ_VarFDP}
\end{align}
As a consequence, by contrast with the FDR which is constantly equal to $\pi_0 \alpha$ in that case, the variance of the FDP depends on the alternative $p$-value c.d.f. $F_1$. Moreover, since the sum in \eqref{equ_VarFDP} equals 
$\E_{0}[(|\SU([G(\alpha (j+1)/m)]_{1\leq j\leq m-1})|+1)^{-1} ]$ (where $\E_0$ denotes the expectation with respect to $m-1$ i.i.d. uniform $p$-values), the smaller $F_1$ (point-wise), the larger the variance. Therefore, over the set $F_1\in\mathcal{F}$, the least  favorable configuration for the variance (that is, the configuration where the variance is the largest) is given by $F_1=0$ while the most favorable configuration (that is, the configuration where the variance is the smallest) is the Dirac-uniform configuration $F_1=1$. Over the more ``realistic'' c.d.f. sets $\mathcal{F}'=\{F_1 \in \mathcal{F} \telque \forall x \in(0,1), F_1(x)\geq  x\}$ and $\mathcal{F}_{\varepsilon}=\{F_1 \in \mathcal{F} \telque \forall x \in(0,1), F_1(x)\geq \varepsilon\}$, $0<\varepsilon \leq 1$ the least  favorable configurations for the variance are given respectively by $F_1(x)=x$ and $F_1(x)=\varepsilon$.  
For these extreme configurations, the expression of the variance \eqref{equ_VarFDP} can be simplified by using the next formula (proved in Section~\ref{proofcor_LCPVAR}): for any threshold of the form $t_k=\beta+k\gamma$, $1\leq k \leq m$, with $\beta,\gamma\geq 0$, 
 \begin{equation}\label{key-rel-var}
\E_0[(|\SU(\mbf{t})|+1)^{-1} ] =  \frac{1}{\gamma-\beta}\bigg[\frac{(1+\gamma-\beta)^{m+1}-1}{m+1} - \gamma \big[(1+\gamma-\beta)^{m}-1\big]\bigg]
 \end{equation}
 for $\gamma\neq \beta$ and $\E_0[(|\SU(\mbf{t})|+1)^{-1} ]=1-m\gamma$ otherwise (where $\E_0$ denotes the expectation with respect to $m$ i.i.d. uniform $p$-values)  This leads to the following result.

\begin{theorem}\label{cor_LCPVAR}
Consider the linear step-up procedure LSU in the unconditional independent model with parameters $\pi_0$ and $F_1$. Then for any $m\geq 2$, $\alpha \in (0,1)$, $\pi_0\in [0,1]$ and $\varepsilon\in(0,1]$, under the generating distribution ${(H,\mathbf{p})\sim \PExI_{(\pi_0,F_1)}}$, the following holds:
\begin{align}
 \min_{F_1\in\mathcal{F}} \{\V[\FDP(\LSU) ] \} &= \min_{F_1\in\mathcal{F}'} \{\V[\FDP(\LSU) ] \} = \min_{F_1\in\mathcal{F}_\varepsilon} \V[\FDP(\LSU) ] \} \nonumber\\
 &= \frac{\alpha \pi_0}{m} \frac{1-\pi_0^m}{1-\pi_0} -  \frac{(\alpha \pi_0)^2}{m} \left(\frac{1-\pi_0^{m-1}}{1-\pi_0} +1\right)\label{equ_BFPVAR}\\
 \max_{F_1\in\mathcal{F}} \{\V[\FDP(\LSU) ] \} &=  \alpha\pi_0 (1-\alpha\pi_0)\nonumber\\
 \max_{F_1\in\mathcal{F}'} \{\V[\FDP(\LSU) ] \} &= \alpha\pi_0(1-\alpha) + (1-\pi_0)\frac{\pi_0 \alpha^2}{m} \nonumber\\
 \max_{F_1\in\mathcal{F}_{\varepsilon}} \{\V[\FDP(\LSU) ] \} &=  \frac{\alpha \pi_0}{m} \frac{1-(1-(1-\pi_0)\varepsilon)^m}{(1-\pi_0)\varepsilon} -  \frac{(\alpha \pi_0)^2}{m} \left(\frac{1-(1-(1-\pi_0)\varepsilon)^{m-1}}{(1-\pi_0)\varepsilon} +1\right). \nonumber
\end{align}
\end{theorem}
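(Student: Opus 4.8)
The plan is to combine the variance identity \eqref{equ_VarFDP} with a monotonicity argument that localizes the extremizers, and then to evaluate the variance at those configurations by means of the affine‑threshold formula \eqref{key-rel-var}. As recorded just below \eqref{equ_VarFDP}, we may write $\V[\FDP(\LSU)]=\alpha\pi_0\,A(F_1)-(\alpha\pi_0)^2/m$, where $A(F_1)=\E_0\big[(|\SU(\mathbf{s})|+1)^{-1}\big]$, $\E_0$ is the expectation under $m-1$ i.i.d.\ uniform $p$-values, and $\mathbf{s}=(s_j)_{1\le j\le m-1}$ with $s_j=G(\alpha(j+1)/m)=\pi_0\,\alpha(j+1)/m+\pi_1\,F_1(\alpha(j+1)/m)$. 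So $\V[\FDP(\LSU)]$ depends on $F_1$ only through the finite vector $\big(F_1(\alpha(j+1)/m)\big)_{1\le j\le m-1}$. Since $\mathbf{s}$ is coordinatewise nondecreasing in this vector and a step-up procedure rejects weakly more hypotheses when its threshold increases, coupling two thresholds $\mathbf{s}\le\mathbf{s}'$ through common uniforms gives $|\SU(\mathbf{s})|\le|\SU(\mathbf{s}')|$ pathwise; hence $A(F_1)$, and therefore $\V[\FDP(\LSU)]$, is nonincreasing in each coordinate $F_1(\alpha(j+1)/m)$ (this confirms the ``smaller $F_1$, larger variance'' remark of the text).

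The evaluation points $\alpha(j+1)/m$, $1\le j\le m-1$, all lie in $(0,\alpha]\subset(0,1)$, and for each of $\mathcal{F}$, $\mathcal{F}'$, $\mathcal{F}_\varepsilon$ one exhibits a member realizing on these points the pointwise‑extreme admissible value profile (take $F_1$ constant on $[0,\alpha]$ and increasing to $1$ on $[\alpha,1]$): the Dirac--uniform $F_1\equiv1$ realizes the all‑ones profile and belongs to all three sets, whereas $F_1\equiv0$, $F_1(x)=x$ and $F_1\equiv\varepsilon$ realize the pointwise‑minimal profiles admissible in $\mathcal{F}$, $\mathcal{F}'$ and $\mathcal{F}_\varepsilon$ respectively. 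By the monotonicity just established, over each of the three sets the variance is minimized by $F_1\equiv1$ --- so the three minima coincide --- while it is maximized over $\mathcal{F}$, $\mathcal{F}'$ and $\mathcal{F}_\varepsilon$ by $F_1\equiv0$, $F_1(x)=x$ and $F_1\equiv\varepsilon$ respectively.

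It remains to evaluate $A$ at these four configurations. In each case $G$ is affine on $(0,\alpha]$, so $\mathbf{s}$ has the form $s_j=\beta+j\gamma$ and \eqref{key-rel-var} applies with $m$ replaced by $m-1$: for $F_1\equiv1$ one has $\gamma=\pi_0\alpha/m$, $\beta=\pi_0\alpha/m+\pi_1$ (so $1+\gamma-\beta=\pi_0$); for $F_1\equiv\varepsilon$, $\gamma=\pi_0\alpha/m$, $\beta=\pi_0\alpha/m+\pi_1\varepsilon$ (so $1+\gamma-\beta=1-(1-\pi_0)\varepsilon$); for $F_1\equiv0$, $\gamma=\beta=\pi_0\alpha/m$ (the degenerate line, $A=1-(m-1)\pi_0\alpha/m$); for $F_1(x)=x$, $\gamma=\beta=\alpha/m$ ($A=1-(m-1)\alpha/m$). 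Substituting into $\V[\FDP(\LSU)]=\alpha\pi_0 A-(\alpha\pi_0)^2/m$ and simplifying with $\sum_{i=0}^{n}r^i=(1-r^{n+1})/(1-r)$ yields the four displayed expressions; in particular the degenerate cases give $\alpha\pi_0(1-\alpha\pi_0)$ for $F_1\equiv0$ and $\alpha\pi_0(1-\alpha)+(1-\pi_0)\pi_0\alpha^2/m$ for $F_1(x)=x$.

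Finally, should \eqref{key-rel-var} not be taken as granted (it is proved in the same section), I would derive it by inserting $t_k=\beta+k\gamma$ into \eqref{equ_for_distsu} and using \eqref{relFinner} with $\nu_2=\gamma$, $\nu_1=1-\beta-(m+1)\gamma$ to get $\distsu_m(\mathbf{t},k)=\binom{m}{k}(\beta+k\gamma)^k(1-\beta-m\gamma)(1-\beta-k\gamma)^{m-k-1}$, then evaluating $\sum_{k=0}^m\frac{1}{k+1}\distsu_m(\mathbf{t},k)$ after rewriting $\frac{1}{k+1}\binom{m}{k}=\frac{1}{m+1}\binom{m+1}{k+1}$ and reducing the resulting sum to an Abel--Hurwitz type binomial identity (alternatively, by induction on $m$), the $\gamma=\beta$ case following by letting $\beta\to\gamma$. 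This closed‑form summation is the only genuinely delicate step; granting it, the rest is the monotone‑coupling argument above together with routine algebra, the sole subtlety being that each constant configuration $F_1\equiv c$ must be realized within the relevant set by an honest continuous c.d.f.\ that is merely constant on the finite range of evaluation points.
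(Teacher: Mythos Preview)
Your proof is correct and follows essentially the same route as the paper: the monotonicity of $A(F_1)$ in the pointwise order on $F_1$ (stated in the paragraph preceding the theorem) localizes the extremizers, and \eqref{key-rel-var} with $m-1$ in place of $m$ evaluates the variance at the four affine configurations with the exact $(\beta,\gamma)$ pairs the paper lists. Your extra care in realizing the constant profiles $F_1\equiv 0,\varepsilon$ by continuous c.d.f.'s that merely agree with the constant on $(0,\alpha]$ is a nice point the paper glosses over; the only methodological difference is that the paper derives \eqref{key-rel-var} via the recursion $(i+1)u_i=(1-(m-i)\gamma)-(m-i)(\beta-\gamma)u_{i+1}$ rather than the Abel--Hurwitz summation you sketch, but since you explicitly defer to the paper's proof of \eqref{key-rel-var} this is immaterial.
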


The proof is made in Section~\ref{proofcor_LCPVAR}. 
Using Theorem~\ref{cor_LCPVAR}, we are able to investigate the following asymptotic multiple testing issue:  does the $\FDP$ converge to the $\FDR$ as $m$ grows? Establishing the latter is crucial because when one establishes $\FDR\leq \gamma$, one implicitly wants that for the observed realization $\omega$, the control $\FDP(\omega)\leq \gamma'$ still holds for $\gamma'\simeq \gamma$ (at least with high probability and when $m$ is large). 
Here, the variance measure the $L^2$ distance between the FDP and the FDR, and since $\FDP\in[0,1]$ the latter distance tends to zero if and only if the FDP converges to the FDR in probability.  First, if $\pi_0\in(0,1)$ does not depend on $m$, the convergence holds over the set of c.d.f.'s $\mathcal{F}_{\varepsilon}$, with a distance $(\E(\FDP - \FDR)^2)^{1/2}$ converging to zero at rate $1/\sqrt{m}$. This corroborates previous asymptotic studies in the so-called ``subcritical'' case (see e.g. \cite{Chi2007,Neu2008}). By contrast, when considering the  classes $\mathcal{F}$ and $\mathcal{F}'$ the convergence does not hold in the least favorable configurations $F_1(x)=0$ and $F_1(x)=x$, respectively. The latter is quite intuitive because the denominator in the FDP does not converge to infinity anymore in that cases (see e.g. \cite{FR2002}), so these configurations can probably be considered as ``marginal''. Second, our non-asymptotic approach allows to make $\pi_0$ depends on $m$ in the following way $1-\pi_0=1-\pi_{0,m}\sim m^{-\beta}$ with $0< \beta \leq 1$, which corresponds to a classical ``sparse'' setting (see e.g. \cite{DJ2004}). 
Expression \eqref{equ_BFPVAR} implies that, in this sparse case, the variance is always larger than a quantity of order $1/m^{1-\beta}$. In particular, when $1-\pi_{0,m}\sim 1/m$, for any alternative c.d.f. $F_1$, the FDP does not converge to the FDR, and when $1-\pi_{0,m}\sim m^{-\beta}$ with $0< \beta < 1$, for all $F_1$, the convergence  of the FDP towards the FDR is of order slower than $1/m^{(1-\beta)/2}$ (in $L^2$ norm). As illustration, for $m=10000$, $1-\pi_0=1/100$ and  $\alpha=0.05$, expression \eqref{equ_BFPVAR} gives $(\E(\FDP - \FDR)^2)^{1/2} \geq 0.0217$, so the FDP has a distribution quite spread around the $\FDR=\pi_0 \alpha \simeq 0.05$. 
As a conclusion, considering a sparse signal slows down the convergence of the FDP to the FDR, so any FDR control should be interpreted with cautious, even in this very standard framework (independent $p$-values with the LSU procedure). 




\section{Extensions}\label{sec:discuss}
Our approach is also useful  to study the false non-discovery proportion (FNDP), that is, the proportion of false hypotheses among the non-rejected hypotheses, and in particular the false non-discovery rate (FNR), defined as the average  of the FNDP. For this, we use the following duality property between step-up and step-down procedures:  point-wise, the hypotheses rejected by $\SD(\mbf{t},\mbf{p})$ are exactly the hypotheses non-rejected by  $\SU(\ol{\mbf{t}},\ol{\mbf{p}})$ with $\ol{p}_i=1-p_i$ and $\ol{t}_r=1-t_{m-r+1}$. Hence the distribution of  the FNDP of a step-down procedure can be deduced from the distribution of the FDP of a step-up procedure. Precisely, for $0\leq k\leq m-1$, the property \eqref{equ_distrFDP} implies that the distribution of the erroneous non-rejection number  $|\cH_1(H)\cap (\SD(\mbf{t}))^c |$ conditionally on $| \SD(\mbf{t}) |=k$ is binomial with parameters $m-k$ and ${\pi_1(1- F_1(t_{k+1}))}/{(1-G(t_{k+1}))}$. In particular, this leads to
\begin{align}
\FNR(\SD(\mbf{t}),\PExI_{\pi_0,F_1})=m\pi_1 \sum_{k=0}^{m-1} \frac{1-F_1(t_{k+1})}{m-k} \distsd_{m-1}((G(t_j))_{1\leq j\leq m-1},k).\nonumber
\end{align}
Moreover, applying once more the duality property between step-up and step-down, we deduce from Section~\ref{sec:indep-step-down} that for a step-up procedure, the distribution of the erroneous non-rejection number conditional on the rejection number is not binomial, in general, while we can still obtain an explicit expression for the FNR. 

Finally, since our formulas depend on the true parameters of the model, which are in general unknown in a statistical approach, one may formulate the concern of estimating these quantities in our formulas. We did not investigate in detail this issue as it would exceed the scope of this paper. Here, we simply notice that plugging convergent estimators of the parameters in our formulas will lead to convergent estimators for the corresponding quantity (e.g. $\P(\FDP\leq x)$, FDR or Power), 
because our formulas are continuous in all the model parameters.


\section{Proofs}

\subsection{Proof of Theorem~\ref{main_indep}}

Let us first prove \eqref{equ_distrFDP} by computing the joint distribution of $|\cH_0(H)\cap \SU(\mbf{t}) |$ and $| \SU(\mbf{t}) |$. In the independent unconditional model, we may use the exchangeability of $(H_i,p_i)_i$ to obtain for any $0\leq j \leq \l \leq m$,
\begin{align*}
&\P[|\cH_0(H)\cap \SU(\mbf{t}) | = j, | \SU(\mbf{t}) | = \l ]\\
&= {{\l} \choose {j}} {{m} \choose {\l}} \P[\cH_0(H)\cap \SU(\mbf{t})  = \{1,...,j\},  \SU(\mbf{t})  = \{1,...,\l\} ]\\
 &= {{\l} \choose {j}} {{m} \choose {\l}} \P[ \SU(\mbf{t})  = \{1,...,\l\} , H_1=...=H_j=0, H_{j+1}=...=H_{\l}=1].
\end{align*}
Next, by definition of a step-up procedure, we have $\SU(\mbf{t})=\{i\telque p_i\leq {t}_{\hat{k}}\}$ with $\hat{k}=|\SU(\mbf{t})|$ (expression related to the ``self-consistency''  condition introduced in \citep{BR2008EJS}). Using Lemma~\ref{lemma_stepup}, if $\hat{k}'_{(\l)}$ denotes the number of rejections of the step-up procedure of threshold $(t_{j+\l})_{1\leq j \leq m-\l}$ over $m-\l$ hypotheses and using the $p$-values $p_{\l+1}$, ..., $p_{m}$, we have
\begin{align*}
\SU(\mbf{t})  = \{1,...,\l\}  &\Longleftrightarrow p_1\leq t_\l, ... , p_\l \leq t_\l , \,\hat{k}  = \l\\
 &\Longleftrightarrow p_1\leq t_\l, ... , p_\l \leq t_\l ,\,\hat{k}'_{(\l)} = 0.
\end{align*}
Therefore, 
\begin{align*}
&\P[|\cH_0(H)\cap \SU(\mbf{t}) | = j, | \SU(\mbf{t}) | = \l ]\\
 &= {{\l} \choose {j}} {{m} \choose {\l}} \P[ p_1\leq t_\l, ... , p_\l \leq t_\l ,\,\hat{k}'_{(\l)} = 0, H_1=...=H_j=0, H_{j+1}=...=H_{\l}=1]\\
 &= {{\l} \choose {j}} {{m} \choose {\l}} \P[ p_1\leq t_\l, ... , p_j \leq t_\l , H_1=...=H_j=0] \\
 &\,\,\times\P[ p_{j+1}\leq t_{\l}, ... , p_\l \leq t_\l , H_{j+1}=...=H_{\l}=1]\, \P[ \hat{k}'_{(\l)} = 0]\\
 &=  {{\l} \choose {j}} {{m} \choose {\l}} (\pi_0F_0(t_\l))^j (\pi_1F_1(t_\l))^{\l-j}\Psi_{m-\l}\big(1-G(t_m),...,1-G(t_{\l+1})\big),
\end{align*}
where we used the independence between the $(H_i,p_i)$ in the second equality. This leads to \eqref{equ_distrFDP} and then to \eqref{equ_FDP_indep}. For  \eqref{equ_mom_FDP_indep}, we use the Stirling numbers of second kind 
 and the formula of the $s$-th moment of a binomial distribution of Section~\ref{sec:notation}. Expression \eqref{equ_FDR_indep} is a direct consequence of \eqref{equ_mom_FDP_indep} for $s=1$. 
For the power computation, from \eqref{equ_distrFDP}, the distribution of $|\cH_1(H)\cap \SU(\mbf{t}) |$ conditionally on $\hat{k}=| \SU(\mbf{t}) |$
is binomial with parameters $\hat{k}$ and $\pi_1 F_1(t_{\hat{k}})/G(t_{\hat{k}})$. Therefore,
$\E[|\cH_1(H)\cap \SU(\mbf{t}) |]=   \E[\pi_1 \hat{k} F_1(t_{\hat{k}})/G(t_{\hat{k}})] $ and \eqref{equ_Pow_indep} follows.

\subsection{Proof of Theorem~\ref{main_indep_sd}}\label{proof_equ_jointdist_sd}

Let us prove the FDR expression (the proof for the power is similar). Define $\tilde{k}=|{\SD(\mbf{t})}|$ and $\tilde{k}_{(1)}$, $\tilde{k}'_{(1)}$ as in Lemma~\ref{lemma_down}. We get by exchangeability of $(H_i,p_i)_i$ and  independence of the $p$-values,
\begin{align*}
\FDR(R) &=\sum_{i=1}^m  \E\bigg[\frac{\ind{p_i\leq t_{\tilde{k}}}}{\tilde{k}\vee 1} \ind{H_i=0}\bigg] = m  \E\bigg[\frac{\ind{p_1\leq t_{\tilde{k}}}}{\tilde{k}\vee 1} \ind{H_1=0}\bigg]\\
&= m   \E\bigg[\frac{\ind{p_1\leq t_{\tilde{k}_{(1)}+1}}}{\tilde{k}'_{(1)}+1}\ind{H_1=0}\bigg] =\pi_0 m   \E\bigg[\frac{ F_0(t_{\tilde{k}_{(1)}+1})}{\tilde{k}'_{(1)}+1}\bigg].
\end{align*}
Therefore, expression \eqref{equ_FDR_indep_sd} will be proved as soon as we state that
for any $k,k'$ with $0\leq k \leq m$ and $k\leq  k' \leq m$, that we have
\begin{align}
& \prob{|\SD(\mbf{t})|=k,|\SD(\mbf{t}')|=k' }\nonumber \\&=\: \distsd_m\big((G(t_j))_{1\leq j\leq m},k\big)\: \distsd_{m-k} \left( \left(\frac{G(t_{k+1+j})-G(t_{k+1})}{1-G(t_{k+1})}\right)_{1\leq j\leq m-k},\:k'-k \right), \label{equ_jointdist_sd}
\end{align}
for any threshold $(t_j)_{1\leq j\leq m+1 }$ and letting $\mbf{t}=(t_j)_{1\leq j \leq m}$ and $\mbf{t}'=(t_{j+1})_{1\leq j \leq m}$  .
To prove \eqref{equ_jointdist_sd}, remark that we may assumed that $G(x)=x$ up to replace $(t_j)_j$ by $(G(t_j))_j$ (because $G$ is continuous increasing).
Next, assume $k<k'<m$ and denote $L(r)=\sum_{i=1}^m \ind{p_i\leq t_r}$. By definition of a step-down procedure, the probability $\prob{|\SD(\mbf{t})|=k,|\SD(\mbf{t}')|=k'}$ is equal to
\begin{align*}
&\hspace{-1cm}\prob{ \forall j \leq k, L(j)\geq j , L(k+1)=k,  \forall j, k+1\leq j \leq k', L(j+1)\geq j, L(k'+2)=k'}\\
=&\: {m \choose k}  {m-k \choose k'-k} \:\:\:\P\big[ \forall j \leq k, L(j)\geq j , \forall j, k+1\leq j \leq k', L(j+1)\geq j, \\
&p_1,...,p_k \leq t_{k+1} < p_{k+1}, ..., p_{k'} \leq t_{k'+2} < p_{k'+1}, ...,  p_m\big]\\
=& \:{m \choose k}  {m-k \choose k'-k} \:\:\:\P\bigg[ p_1,...,p_k \leq t_{k+1} < p_{k+1}, ..., p_{k'} \leq t_{k'+2} < p_{k'+1}, ...,  p_m\,, \\
&\forall j \leq k, \sum_{i=1}^k \ind{p_i\leq t_j}\geq j , \:\:\forall j, k+1\leq j \leq k',  \sum_{i=k+1}^{k'} \ind{p_i\leq t_{j+1}} \geq j-k  \bigg]\\
=&\: {m \choose k}  {m-k \choose k'-k} \prob{ p_{(1)} \leq   t_1,...,p_{(k)}\leq t_k}(1-t_{k'+2})^{m-k'}\\
&\times \prob{ t_{k+1}<p_{(1)} \leq  t_{k+2},...,p_{(k'-k)}\leq t_{k'+1}},
\end{align*}
where we used that the $p$-values are i.i.d. Simple computations give that
$$
\prob{ t_{k+1}<p_{(1)} \leq  t_{k+2},...,p_{(k'-k)}\leq t_{k'+1}} = \prob{ p_{(1)} \leq  t_{k+2}-t_{k+1},...,p_{(k'-k)}\leq t_{k'+1}-t_{k+1}}
.$$
This leads to \eqref{equ_jointdist_sd}. The cases $k<k'=m$ and $k=k'$ are similar.

\subsection{Proof of \eqref{equ_distr_SD_1}, \eqref{equ_distr_SD_2} and \eqref{equ_distr_SD_3}}\label{preuve_distr_SD}

Let us state the following general expression: for $1\leq j\leq k \leq m$,
\begin{align}
&\P[|\cH_0(H)\cap \SD(\mbf{t}) |=j, |\SD(\mbf{t})|=k] \nonumber\\
&= {{k} \choose {j}}\pi_0^{j} \pi_1^{k-j} \frac{\P[U_{(1)}\leq t_{1},..., U_{(k)}\leq t_{k}\telque H_1=...=H_j=0, H_{j+1}=...=H_k=1]}{\Psi_{k}\big(G(t_1),...,G(t_k))},\label{equ_distr_SD}
\end{align}
where $(H_i,U_i)_{1\leq i \leq k}$ is a sequence of i.i.d. variables following the unconditional independent model (for $k$ hypotheses).
Then, applying \eqref{equ_distr_SD} in the case $j=k$, $j=0$ and $F_1=1$ will lead to  \eqref{equ_distr_SD_1}, \eqref{equ_distr_SD_2} and \eqref{equ_distr_SD_3}, respectively. To state \eqref{equ_distr_SD}, we use that $(H_i,p_i)_i$ is a i.i.d. sequence: 
\begin{align*}
&\P[|\cH_0(H)\cap \SD(\mbf{t}) |=j , |\SD(\mbf{t})|=k] \\
&= {k \choose j}   {m \choose k}  \P[  \SD(\mbf{t}) =\{1,...,k\}, H_1=...=H_j=0, H_{j+1}=...=H_k=1]\\
&= {k \choose j}   {m \choose k}  \P[ \forall \l \leq k, \sum_{i=1}^k  \ind{p_i\leq t_\l} \geq \l , \forall i\geq k+1, p_i>t_{k+1}, \\
&\:\: H_1=...=H_j=0, H_{j+1}=...=H_k=1]\\
&= {{k} \choose {j}}\pi_0^{j} \pi_1^{k-j} \P[ \forall \l \leq k, \sum_{i=1}^k  \ind{p_i\leq t_\l} \geq \l  \telque H_1=...=H_j=0, H_{j+1}=...=H_k=1] \\
&\:\:\times  {m \choose k}  (1-G(t_{k+1}))^{m-k},
\end{align*}
which leads to \eqref{equ_distr_SD} by applying \eqref{equ_dist_sd}.

\subsection{Proof of Theorem~\ref{th_m2}}

We focus on the step-up case, when $\rho\in(-1,1)$ and $m_0=1$ (the remaining cases are left to the reader). Without loss of generality, we may assume that the first coordinate correspond to the true null, that is, $H=(0,1)$. In this context, the FDP takes one of the three values: $0, \frac{1}{2}, 1$, according to the location of the tests statistics $Y_i=X_i+\mu H_i$ with respect to the critical values $z_1$ and $z_2$. From the definition of a step-up procedure, we may define the two regions for $i \in \{1,2\}$, $\mathcal{D}_i=\{(y_1,y_2)\in\mathbb{R}^2 \:|\: \FDP(y_1,y_2)=i/2 \}$, where  $\FDP(y_1,y_2)$ denotes the FDP of $\SU(\mbf{t})$ taken in the $p$-values $\mbf{p}=(\overline{\Phi}(y_1),\overline{\Phi}(y_2))$. The regions $\mathcal{D}_i$ are represented on Figure \ref{m01}({\bf{a}}). Next, since $(Y_1,Y_2)$ follows the EMN model, 
we may write for $i \in \{1,2\}$, $\P(\FDP(Y_1,Y_2)={i}/{2})=(2\pi  \sqrt{1-\rho^2})^{-1}  \int \int_{\mathcal{D}_i}
\exp\{-\frac{1}{2(1-\rho^2)} \big(y_1 -\rho (y_2-\mu)\big)^2 -  \frac{1}{2} (y_2-\mu)^2   \}\,dy_1dy_2=(2\pi)^{-1}  \int \int_{\mathcal{\tilde{D}}_i}
\exp\{-\frac{1}{2} (u^2 +v^2)\} \,du dv$
by using the substitution $u =(\sqrt{1-\rho^2})^{-1} \big(y_1 -\rho
(y_2-\mu)\big)$ and $v=y_2-\mu$, and where the resulting integration domain  $\mathcal{\tilde{D}}_i$ is represented on Figure \ref{m01}({\bf{b}}). Therefore, we  obtain $
\P(\FDP={1}/{2})= ({\sqrt{2\pi}})^{-1} \int_{z_1-\mu}^{\infty} {\exp\{- v^2/2\}} \ol \Phi\left({(z_1-\rho v)}/{\sqrt{1-\rho^2}}\right)\,dv, $ and 
$\P(\FDP=1)= ({\sqrt{2\pi}})^{-1}  \int_{-\infty}^{z_1-\mu} {\exp\{-
    v^2/2\}}\ol \Phi\left({(z_2-\rho v)}/{\sqrt{1-\rho^2}}\right)\,dv,
$
and the final expression results by using the substitution $w=\ol \Phi(v)$.

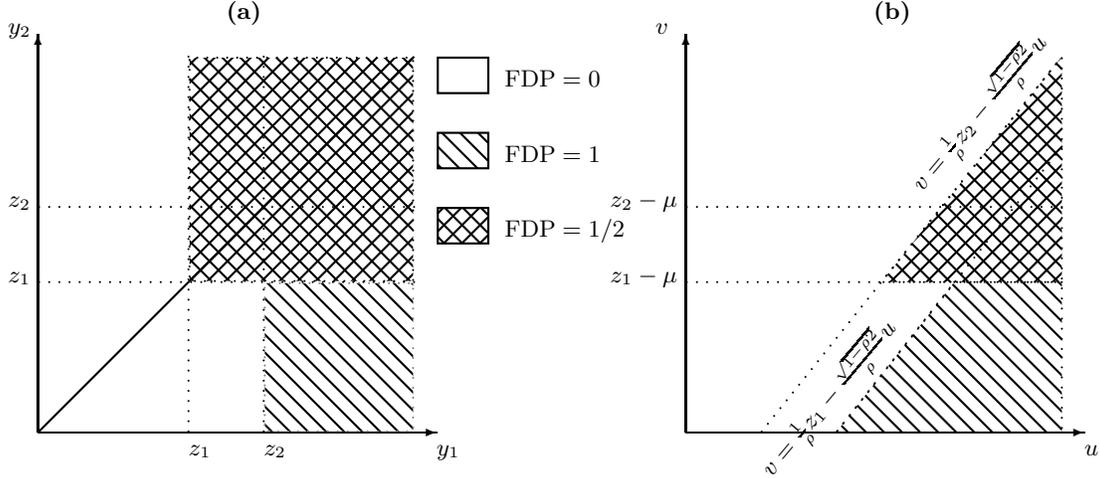
\begin{figure}[hbtp]
\vspace{0.5cm}
\hspace{-0.7cm}\begin{minipage}{5cm}\begin{pspicture}(5,5)
\put(0,0){\vector(1,0){5.3}}
\put(0,0){\vector(0,1){5.3}}
\put(5.3,-0.3){$y_1$}
\put(-0.4,5.3){$y_2$}
\psline(0,0)(5,5)
\psline[linestyle=dotted](2,0)(2,5)
\psline[linestyle=dotted](3,0)(3,5)
\psline[linestyle=dotted](0,2)(5,2)
\psline[linestyle=dotted](0,3)(5,3)
\put(2,-0.3){$z_1$}
\put(3,-0.3){$z_2$}
\put(-0.4,2){$z_1$}
\put(-0.4,3){$z_2$}
\psframe[linestyle=dotted,fillstyle=vlines,linecolor=gray]%
(3,0)(5,2)
\psframe[linestyle=dotted,fillstyle=crosshatch]%
(2,2)(5,5)
\psframe[fillstyle=vlines]%
(5.3,3.5)(6,4)
\psframe[fillstyle=crosshatch]%
(5.3,2.5)(6,3)
\psframe%
(5.3,4.5)(6,5)
\put(6.2,3.6){$\FDP=1$}
\put(6.2,2.6){$\FDP=1/2$}
\put(6.2,4.6){$\FDP=0$}
\put(2.5,5.5){\bf{(a)}}
\end{pspicture}
\end{minipage}
\hspace{3.5cm}\begin{minipage}{4cm}
\begin{pspicture}(5,5)
\put(0,0){\vector(1,0){5.3}}
\put(0,0){\vector(0,1){5.3}}
\put(5.3,-0.3){$u$}
\put(-0.4,5.3){$v$}
\psline[linestyle=dotted](2,0)(5,3.8)
\psline[linestyle=dotted](1,0)(5,5)
\psline[linestyle=dotted](0,2)(5,2)
\psline[linestyle=dotted](0,3)(5,3)
\put(-1,2){$z_1-\mu$}
\put(-1,3){$z_2-\mu$}
\put(2.9,3.2)
{\begin{turn}{50} $ v=\frac{1}{\rho} z_2 -\frac{\sqrt{1-\rho2}}{\rho} u$ \end{turn}}
\put(0.9,-0.6){\begin{turn}{50} $ v=\frac{1}{\rho} z_1 -\frac{\sqrt{1-\rho2}}{\rho} u$ \end{turn}}
\pspolygon[linestyle=dotted,fillstyle=vlines]%
(5,0)(2,0)(3.57,2)(5,2)
\pspolygon[linestyle=dotted,fillstyle=crosshatch]%
(5,5)(2.6,2)(5,2)
\put(2.5,5.5){\bf{(b)}}
\end{pspicture}
\end{minipage}
\vspace{0.3cm}
\caption{Left: $\mathcal{D}_i$. Right: $\mathcal{\widetilde{D}}_i$. For $i=1$(double cross-hatch) and $i=2$ (simple cross-hatch). 
Graph for $\rho<0$.} 
\label{m01}
\end{figure}

\subsection{Proof of Theorem~\ref{appli_indep_sd} }\label{proof_appli_indep_sd}

For any $t\in[0,1]$, let $G(t)=\pi_0 t + \pi_1 F_1(t)$ and $G_1(t)=\pi_0 t + \pi_1$. 
First, since $F_1$ is concave,  we have for $t<t'\leq 1$, that $(F_1(t')-F_1(t))/(t'-t)\geq (1-F_1(t))/(1-t)$ and thus for $t\leq t'$, we obtain the inequality
\begin{equation}(G(t')-G(t))/(1-G(t)) \geq (G_1(t')-G_1(t))/(1-G_1(t))\label{equ_H}\end{equation}
(by convention, the LHS (resp. RHS) of \eqref{equ_H} is equal to $0$ if $G(t)=1$ (resp. $G_1(t)=1$)). From expression \eqref{equ_FDR_indep_sd},  we obtain that $\FDR(\LSD,\PExI_{(\pi_0,F)})$ equals
\begin{align*}
 &\pi_0  m \sum_{k=1}^{m}    \distsd_{m-1}((G(t_j))_{1\leq j \leq m-1},k-1)\sum_{i=0}^{m-k}  \frac{t_k}{k+i} 
\distsd_{m-k} \left( \left(\frac{G(t_{k+j})-G(t_{k})}{1-G(t_{k})}\right)_{1\leq j\leq m-k},i \right)\\
&\leq \pi_0  m \sum_{k=1}^{m}   \distsd_{m-1}((G(t_j))_{1\leq j \leq m-1}, k-1) \sum_{i=0}^{m-k} \frac{t_k}{k+i} 
 \distsd_{m-k} \left( \left(\frac{G_1(t_{k+j})-G_1(t_{k})}{1-G_1(t_{k})}\right)_{1\leq j\leq m-k},i \right)\\
&= \pi_0  m \sum_{k=1}^{m}  \distsd_{m-1}((G(t_j))_{1\leq j \leq m-1},k-1) \sum_{i=0}^{m-k} \frac{t_k}{k+i} 
\distsd_{m-k} \left( \left(\frac{t_{k+j}-t_k}{1-t_k}\right)_{1\leq j\leq m-k},i \right),
\end{align*}
where the inequality comes from  \eqref{equ_H} and because for a fixed
$k$, the sum over $i$ can be seen as the expectation of $t_k/(k+I)$
where $I$ is the rejection number of a step-down procedure (point-wise
nondecreasing in the threshold). Next, considering this time the sum
over $k$ as an expectation, since $G\leq G_1$ and since a step-down
procedure is point-wise nondecreasing in the threshold, the proof is
finished by using assumption \eqref{fun_croiss}.

Consider now the case where $t_j=\alpha j/m$ and let us prove that this threshold satisfies  \eqref{fun_croiss}. For any $m\geq 2$ and $1\leq k\leq m$, let us denote $S_{m,k}= \sum_{i=0}^{m-k} \frac{k}{k+i} \distsd_{m-k} \left(
  \left(\frac{\alpha j/m}{1-\alpha k/m}\right)_{1\leq j\leq m-k},i
\right)$. Letting $a_{m,k}=\frac{k}{m}\bigg(1- \alpha\: \frac{m-k}{m-\alpha k}\bigg)$ (increasing in $k$) and $b_{m,k}=\frac{m-\alpha}{m}\frac{m-k}{m-\alpha k}$ (decreasing in $k$),
we may prove the following recursion (see the proof below): 
\begin{equation}\label{equ_recursion}
S_{m,k}=a_{m,k}+b_{m,k}  S_{m-1,k}. 
\end{equation}
Using \eqref{equ_recursion} we propose to state that $(t_k)_k$ satisfies \eqref{fun_croiss}, that is,``$k\in\{1,...,m\}\mapsto S_{m,k}$ is nondecreasing'' by a recurrence on $m\geq 2$: the property is obviously true for $m=2$. Assuming the property true for $m-1$, 
we obtain for any $2\leq k\leq m-1$,
\begin{align*}
S_{m,k}-S_{m,k-1} &= a_{m,k} -a_{m,k-1} +b_{m,k}  S_{m-1,k}-b_{m,k-1}  S_{m-1,k-1}\\
&= a_{m,k} -a_{m,k-1} +(b_{m,k} -b_{m,k-1} ) S_{m-1,k}+b_{m,k-1}  (S_{m-1,k}-S_{m-1,k-1})\\
&\geq  (a_{m,k} +b_{m,k}) - (a_{m,k-1} +b_{m,k-1} ),
\end{align*}
because $S_{m-1,k}\leq 1$. Hence, since $a_{m,k} +b_{m,k}=1- \frac{\alpha}{m} \frac{m-k}{m-\alpha k}$, the quantity $a_{m,k} +b_{m,k}$ is increasing in $k$ and $S_{m,k}-S_{m,k-1} \geq 0$. Also, we obviously have $S_{m,m}=1\geq S_{m,m-1}$, and the recurrence is completed.

We now finally state \eqref{equ_recursion}. Let for $1\leq j\leq m-k$, $t_j=\frac{\alpha j/m}{1-\alpha k/m}$ and $t'_j=1-t_{m-k-j+1}$, so that
\begin{align*}
S_{m,k}=\E_0\left[\frac{k}{k+ |\SD((t_{j})_{1\leq j \leq m-k})|} \right] = \E_0\left[\frac{k}{m-|\SU((t'_{j})_{1\leq j \leq m-k})|} \right],
\end{align*}
where $\E_0$ denotes the expectation with respect to i.i.d. uniform $p$-values.
Hence, denoting $t'_j=\beta +j \gamma$ with $\beta=\frac{m-m \alpha -\alpha}{m- \alpha k}$ and $\gamma=\frac{\alpha}{m- \alpha k}$, we obtain
\begin{align*}
\frac{S_{m,k}}{k}
=&\frac{1}{m}  + \frac{1}{m} \E_0\left[ \frac{|\SU((t'_{j})_{1\leq j \leq m-k})|}{m-|\SU((t'_{j})_{i\leq j \leq m-k})|} \right]\\
=&\frac{1}{m}+\frac{1}{m} \sum_{j=1}^{m-k} \frac{j}{m-j} {m-k \choose j}
(t'_{j})^j \Psi_{m-k-j}\big( 1-t'_{m-k},...,1-t'_{j+1}\big)\\
=&\frac{1}{m}+\frac{m-k}{m} \sum_{j=1}^{m-k} \frac{t'_j}{m-j} {m-k-1 \choose j-1}
(t'_{j})^{ j-1} \Psi_{m-k-j}\big( 1-t'_{m-k},...,1-t'_{j+1}\big),
 \end{align*}
so that $\frac{S_{m,k}}{k}$ equals
\begin{align*}
&\frac{1}{m}-\gamma \frac{m-k}{m}+ \frac{m-k}{m}(\beta+m \gamma)  \sum_{j=0}^{m-1-k} \frac{(t'_{j+1})^{ j}}{m-1-j} {m-1-k \choose j}
 \Psi_{m-1-k-j}\big( 1-t'_{m-k},...,1-t'_{j+2}\big) \\
&=\frac{1}{m}-\gamma \frac{m-k}{m}+ \frac{m-k}{m}(\beta+m \gamma)  \E_0\left[ \frac{1}{m-1-|\SU((t'_{j+1})_{1\leq j \leq m-1-k})|} \right] \\
&=\frac{1}{m}-\gamma \frac{m-k}{m}+ \frac{m-k}{m}(\beta+m \gamma) \frac{S_{m-1,k}}{k},
 \end{align*}
and the recursion \eqref{equ_recursion} is proved.

\subsection{Proofs for Section~\ref{sec:LFCvar}}\label{proofcor_LCPVAR}

Let us first prove \eqref{key-rel-var}:  denote for any $0\leq i \leq m-1$, $u_i= \E_0[(|\SU((t_{j})_{i+1\leq j \leq m})|+i+1)^{-1} ]$ (where $\E_0$ denotes the expectation with respect to i.i.d. uniform $p$-values) and $u_{m}=1/(m+1)$, $u_{m+1}=0$, so that $u_0$ equals the quantity in \eqref{key-rel-var}. We may prove the following recursion relation: for any $0\leq i \leq m$,
 \begin{equation}\label{recursion-rel-var}
(i+1)u_i = (1-(m-i)\gamma) - (m-i)(\beta-\gamma) u_{i+1}.
 \end{equation}
Expression \eqref{recursion-rel-var} is proved as follows: for $i<m$,
 \begin{align*}
(i+1)u_i=& 1- \E_0\left[ \frac{|\SU((t_{j})_{i+1\leq j \leq m})|}{|\SU((t_{j})_{i+1\leq j \leq m})|+i+1} \right] \\
=&1-\sum_{k=1}^{m-i} \frac{k}{k+i+1} {m-i \choose k} (t_{k+i})^k \Psi_{m-i-k}\big( 1-t_m,...,1-t_{k+i+1}\big)\\
=& 1- (m-i)\sum_{k=1}^{m-i} \frac{t_{k+i}}{k+i+1} {m-i-1 \choose k-1} (t_{k+i})^{k-1} \Psi_{m-i-k}\big( 1-t_m,...,1-t_{k+i+1}\big)\\
 =& 1- (m-i)\sum_{k=0}^{m-(i+1)} \left(\gamma+\frac{\beta-\gamma}{k+(i+1)+1}\right) {m-(i+1) \choose k} (t_{k+(i+1)})^{k} \\
&\times \Psi_{m-(i+1)-k}\big( 1-t_m,...,1-t_{k+(i+1)+1}\big). 
 \end{align*}
Next, we obtain that the solution of the recursion \eqref{recursion-rel-var} is given by
$$u_i = \sum_{j=0}^{m-i} \frac{1-(m-(i+j))\gamma}{m-(i+j)}  \:\:\frac{(m-i)\times \cdots \times (m-(i+j))}{(i+1) \times \cdots \times (i+j+1)} (\gamma-\beta)^j,$$
 which leads to $u_0 = \sum_{j=0}^{m} \frac{1-(m-j)\gamma}{j+1}{m \choose j} (\gamma-\beta)^j   $ and \eqref{key-rel-var} results.
 
To prove Theorem~\ref{cor_LCPVAR},  we combine  \eqref{equ_VarFDP} and \eqref{key-rel-var}, the latter using $m-1$ hypotheses and special values for $\beta$ and $\gamma$: $\beta=\gamma=\pi_0\alpha /m$ for  $F_1(x)=0$\,;  $\beta=\gamma=\alpha /m$ for  $F_1(x)=x$\,; $\beta=\pi_0\alpha /m+(1-\pi_0)\varepsilon$ and $\gamma=\pi_0\alpha /m$ for $F_1(x)= \varepsilon$.
%

%



\section*{Acknowledgements}

We would like to warmly acknowledge Gilles Blanchard, Sylvain Delattre and Zhan Shi for helpful discussions.
This work was supported by the French Agence Nationale de la Recherche (ANR grant references: ANR-09-JCJC-0027-01, ANR-PARCIMONIE, ANR-09-JCJC-0101-01).

\bibliography{biblio}

\begin{thebibliography}{}

\bibitem[Benjamini and Hochberg, 1995]{BH1995}
Benjamini, Y. and Hochberg, Y. (1995).
\newblock Controlling the false discovery rate: a practical and powerful
  approach to multiple testing.
\newblock {\em J. Roy. Statist. Soc. Ser. B}, 57(1):289--300.

\bibitem[Benjamini et~al., 2006]{BKY2006}
Benjamini, Y., Krieger, A.~M., and Yekutieli, D. (2006).
\newblock Adaptive linear step-up procedures that control the false discovery
  rate.
\newblock {\em Biometrika}, 93(3):491--507.

\bibitem[Benjamini and Yekutieli, 2001]{BY2001}
Benjamini, Y. and Yekutieli, D. (2001).
\newblock The control of the false discovery rate in multiple testing under
  dependency.
\newblock {\em Ann. Statist.}, 29(4):1165--1188.

\bibitem[Blanchard and Roquain, 2008]{BR2008EJS}
Blanchard, G. and Roquain, E. (2008).
\newblock Two simple sufficient conditions for {FDR} control.
\newblock {\em Electron. J. Stat.}, 2:963--992.

\bibitem[Blanchard and Roquain, 2009]{BR2008b}
Blanchard, G. and Roquain, E. (2009).
\newblock Adaptive {FDR} control under independence and dependence.
\newblock {\em J. Mach. Learn. Res.}, 10:2837--2871.

\bibitem[Chi, 2007]{Chi2007}
Chi, Z. (2007).
\newblock On the performance of {FDR} control: constraints and a partial
  solution.
\newblock {\em Ann. Statist.}, 35(4):1409--1431.

\bibitem[Chi and Tan, 2008]{Chi2008}
Chi, Z. and Tan, Z. (2008).
\newblock Positive false discovery proportions: intrinsic bounds and adaptive
  control.
\newblock {\em Statist. Sinica}, 18(3):837--860.

\bibitem[Dickhaus, 2008]{Dick2008}
Dickhaus, T. (2008).
\newblock {\em False Discovery Rate and Asymptotics}.
\newblock PhD thesis, Heinrich-Heine-Universit{\"a}t D{\"u}sseldorf.

\bibitem[Donoho and Jin, 2004]{DJ2004}
Donoho, D. and Jin, J. (2004).
\newblock Higher criticism for detecting sparse heterogeneous mixtures.
\newblock {\em Ann. Statist.}, 32(3):962--994.

\bibitem[Efron, 2009]{Efron2009}
Efron, B. (2009).
\newblock Correlated z -values and the accuracy of large-scale statistical
  estimates.
\newblock Preprint.

\bibitem[Efron et~al., 2001]{ETST2001}
Efron, B., Tibshirani, R., Storey, J.~D., and Tusher, V. (2001).
\newblock Empirical {B}ayes analysis of a microarray experiment.
\newblock {\em J. Amer. Statist. Assoc.}, 96(456):1151--1160.

\bibitem[Ferreira and Zwinderman, 2006]{FZ2006}
Ferreira, J.~A. and Zwinderman, A.~H. (2006).
\newblock On the {B}enjamini-{H}ochberg method.
\newblock {\em Ann. Statist.}, 34(4):1827--1849.

\bibitem[Finner et~al., 2009]{FDR2009}
Finner, H., Dickhaus, R., and Roters, M. (2009).
\newblock On the false discovery rate and an asymptotically optimal rejection
  curve.
\newblock {\em Ann. Statist.}, 37(2):596--618.

\bibitem[Finner et~al., 2007]{FDR2007}
Finner, H., Dickhaus, T., and Roters, M. (2007).
\newblock Dependency and false discovery rate: asymptotics.
\newblock {\em Ann. Statist.}, 35(4):1432--1455.

\bibitem[Finner and Roters, 2002]{FR2002}
Finner, H. and Roters, M. (2002).
\newblock Multiple hypotheses testing and expected number of type {I} errors.
\newblock {\em Ann. Statist.}, 30(1):220--238.

\bibitem[Gavrilov et~al., 2009]{GBS2009}
Gavrilov, Y., Benjamini, Y., and Sarkar, S.~K. (2009).
\newblock An adaptive step-down procedure with proven fdr control under
  independence.
\newblock {\em Ann. Statist.}, 37(2):619--629.

\bibitem[Genovese and Wasserman, 2004]{GW2004}
Genovese, C. and Wasserman, L. (2004).
\newblock A stochastic process approach to false discovery control.
\newblock {\em Ann. Statist.}, 32(3):1035--1061.

\bibitem[Glueck et~al., 2008]{Glueck2008}
Glueck, D., Mandel, J., Karimpour-Fard, A., Hunter, L., and Muller, K. (2008).
\newblock Exact calculations of average power for the benjamini-hochberg
  procedure.
\newblock {\em International Journal of Biostatistics}, 4(1):1103--1103.

\bibitem[Guo and Rao, 2008]{GR2008}
Guo, W. and Rao, M.~B. (2008).
\newblock On control of the false discovery rate under no assumption of
  dependency.
\newblock {\em Journal of Statistical Planning and Inference},
  138(10):3176--3188.

\bibitem[Lehmann and Romano, 2005]{LR2005}
Lehmann, E.~L. and Romano, J.~P. (2005).
\newblock Generalizations of the familywise error rate.
\newblock {\em Ann. Statist.}, 33:1138--1154.

\bibitem[Neuvial, 2008]{Neu2008}
Neuvial, P. (2008).
\newblock Asymptotic properties of false discovery rate controlling procedures
  under independence.
\newblock {\em Electron. J. Stat.}, 2:1065--1110.

\bibitem[Owen and Steck, 1962]{OS1962}
Owen, D.~B. and Steck, G.~P. (1962).
\newblock Moments of order statistics from the equicorrelated multivariate
  normal distribution.
\newblock {\em Ann. Math. Statist.}, 33:1286--1291.

\bibitem[Reiner-Benaim, 2007]{Reiner2007}
Reiner-Benaim, A. (2007).
\newblock F{DR} control by the {BH} procedure for two-sided correlated tests
  with implications to gene expression data analysis.
\newblock {\em Biom. J.}, 49(1):107--126.

\bibitem[Roquain, 2007]{Roq2007}
Roquain, E. (2007).
\newblock {\em Exceptional motifs in heterogeneous sequences. Contributions to
  theory and methodology of multiple testing}.
\newblock PhD thesis, Universit\'e Paris XI.

\bibitem[Roquain and van~de Wiel, 2009]{RW2009}
Roquain, E. and van~de Wiel, M. (2009).
\newblock Optimal weighting for false discovery rate control.
\newblock {\em Electron. J. Stat.}, 3:678--711.

\bibitem[Sarkar, 2002]{Sar2002}
Sarkar, S.~K. (2002).
\newblock Some results on false discovery rate in stepwise multiple testing
  procedures.
\newblock {\em Ann. Statist.}, 30(1):239--257.

\bibitem[Sarkar, 2008]{Sar2008}
Sarkar, S.~K. (2008).
\newblock On methods controlling the false discovery rate.
\newblock {\em Sankhya, Ser. A}, 70:135--168.

\bibitem[Shorack and Wellner, 1986]{SW1986}
Shorack, G.~R. and Wellner, J.~A. (1986).
\newblock {\em Empirical processes with applications to statistics}.
\newblock Wiley Series in Probability and Mathematical Statistics: Probability
  and Mathematical Statistics. John Wiley \& Sons Inc., New York.

\bibitem[Simes, 1986]{Sim1986}
Simes, R.~J. (1986).
\newblock An improved {B}onferroni procedure for multiple tests of
  significance.
\newblock {\em Biometrika}, 73(3):751--754.

\bibitem[Storey, 2003]{Storey2003}
Storey, J.~D. (2003).
\newblock The positive false discovery rate: a {B}ayesian interpretation and
  the {$q$}-value.
\newblock {\em Ann. Statist.}, 31(6):2013--2035.

\bibitem[Stuart, 1958]{Stuart1958}
Stuart, A. (1958).
\newblock Equally correlated variates and the multinormal integral.
\newblock {\em J. Roy. Statist. Soc. Ser. B}, 20:373--378.

\bibitem[Zeisel et~al., 2009]{Zei2009}
Zeisel, A., Zuk, O., and Domany, E. (2009).
\newblock Fdr control with adaptive procedures and fdr monotonicity.

\end{thebibliography}
\bibliographystyle{apalike} 

\medskip
\appendix
{\bf \Large Appendix}

\section{Useful lemmas}\label{notproof}

The following lemma is related to the proof of Theorem~2.1 in \cite{FZ2006} and to Lemma~8.1 (i) in \cite{RW2009}.

\begin{lemma} \label{lemma_stepup}
Consider a step-up procedure  $\SU(\mbf{t})$ using a given threshold $\mbf{t}$ testing $m$ null hypotheses with $p$-values $p_1$, ..., $p_m$ and rejecting
 $\hat{k}=|\SU(\mbf{t})|$ hypotheses. For a given $1\leq \l\leq m$, denote by $\hat{k}'_{(\l)}$ the number of rejections of the step-up procedure of threshold $(t_{j+\l})_{1\leq j \leq m-\l}$ over $m-\l$ hypotheses and using the $p$-values $p_{\l+1}$, ..., $p_m$. 
Then we have point-wise
$$\forall 1\leq i \leq \l,\: p_i\leq t_{\hat{k}}\:\: \Longleftrightarrow\:\: \forall 1\leq i \leq \l, \: p_i\leq t_{\hat{k}'_{(\l)}+\l}\:\: \Longleftrightarrow\:\:\hat{k}={\hat{k}'_{(\l)}+\l}.$$
\end{lemma}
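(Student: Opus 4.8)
The plan is to translate everything into the language of the counting functions of the two step-up procedures involved. For $s\in[0,1]$ set $N(s)=|\{i\le m:p_i\le s\}|$ and $N'(s)=|\{\ell<i\le m:p_i\le s\}|$; both are nondecreasing in $s$ and are linked by $N(s)=N'(s)+|\{i\le\ell:p_i\le s\}|$. Since $p_{(k)}\le t_k$ is the same as $N(t_k)\ge k$, the definition of $\SU(\mbf{t})$ reads $\hat k=\max\{k:N(t_k)\ge k\}$, and — because $(t_k)_k$ is nondecreasing — the maximality clause forces the \emph{self-consistency} identity $N(t_{\hat k})=\hat k$ (if $N(t_{\hat k})=j>\hat k$ then $N(t_j)\ge N(t_{\hat k})=j$, contradicting maximality). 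Applying the same remarks to the sub-procedure, whose threshold is $s_j=t_{j+\ell}$ on the $p$-values $p_{\ell+1},\dots,p_m$, gives $\hat k'_{(\ell)}=\max\{k:N'(t_{k+\ell})\ge k\}$ together with $N'(t_{\hat k'_{(\ell)}+\ell})=\hat k'_{(\ell)}$; in the boundary case $\hat k'_{(\ell)}=0$ one has $N'(t_{\ell+1})=0$, hence $N'(t_\ell)=0$ by monotonicity, so this identity holds in all cases.

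With these identities in hand I would close the loop of implications among the three assertions. If $p_i\le t_{\hat k}$ for all $i\le\ell$, then $|\{i\le\ell:p_i\le t_{\hat k}\}|=\ell$, so self-consistency of $\SU(\mbf{t})$ gives $N'(t_{\hat k})=\hat k-\ell=:r\ge0$, whence $\hat k'_{(\ell)}\ge r$; and if $\hat k'_{(\ell)}=r'>r$ then $N'(t_{r'+\ell})\ge r'$ while $p_1,\dots,p_\ell\le t_{r+\ell}\le t_{r'+\ell}$, so $N(t_{r'+\ell})\ge\ell+r'$ and maximality forces $\hat k\ge r'+\ell>\hat k$, a contradiction; hence $\hat k=\hat k'_{(\ell)}+\ell$. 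Conversely, if $\hat k=\hat k'_{(\ell)}+\ell=:r+\ell$, evaluating both self-consistency identities at the common index $\hat k=r+\ell$ gives $N(t_{r+\ell})=r+\ell$ and $N'(t_{r+\ell})=r$, so $|\{i\le\ell:p_i\le t_{r+\ell}\}|=\ell$, which is precisely the first assertion and, since $t_{\hat k}=t_{\hat k'_{(\ell)}+\ell}$, also the second. Finally, if $p_i\le t_{\hat k'_{(\ell)}+\ell}=t_{r+\ell}$ for all $i\le\ell$, then $N(t_{r+\ell})=\ell+N'(t_{r+\ell})=\ell+r$, so $\hat k\ge r+\ell$; and if $\hat k=r'+\ell$ with $r'>r$, self-consistency $N(t_{r'+\ell})=r'+\ell$ forces $N'(t_{r'+\ell})\ge r'$, i.e.\ $\hat k'_{(\ell)}\ge r'>r$, a contradiction, so $\hat k=\hat k'_{(\ell)}+\ell$. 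Chaining these four implications yields the threefold equivalence.

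The only genuinely delicate points are the repeated use of the \emph{maximality} clause in the step-up definition — this is what prevents the counting functions from overshooting the rejection numbers, and it is invoked in each of the three steps — and the index book-keeping created by the shift of the sub-threshold by $\ell$, in particular the degenerate case $\hat k'_{(\ell)}=0$, which is dealt with by the monotonicity of $N'$. Everything else is routine counting, so I expect the write-up to be short once the self-consistency identities are isolated at the start.
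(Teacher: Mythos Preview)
Your proof is correct and follows essentially the same route as the paper's: both recast the step-up rejection number via the counting function $N(t_k)\ge k$ and exploit the self-consistency $N(t_{\hat k})=\hat k$ (and its analogue for the sub-procedure) that maximality forces. The paper streamlines the implication-chasing by first recording the a~priori inequality $\hat k\le \hat k'_{(\ell)}+\ell$ (from monotonicity of $\hat k$ in the $p$-values), which makes (second) $\Leftrightarrow$ (third) a one-liner and then deduces (first) $\Leftrightarrow$ (second) from $t_{\hat k}\le t_{\hat k'_{(\ell)}+\ell}$; you instead prove all directions by direct counting, which is slightly longer but has the merit of isolating the self-consistency identities and handling the degenerate case $\hat k'_{(\ell)}=0$ explicitly.
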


\begin{proof}
First note that since $\hat{k}$ is nondecreasing in each coordinate of $(p_1,...,p_m)$, we always have $\hat{k}\leq {\hat{k}'_{(\l)}+\l}$.
Second, since $p_{(k)}\leq t_k$ is equivalent to $|\{1\leq j\leq m \telque p_j\leq t_k\}|\geq k$, the rejection number of $\SU(\mbf{t})$ can be defined as $\wh{k}= \max\{k\in\{0,1,...,m\}\telque |\{1\leq j\leq m \telque p_j\leq t_k\}|\geq k \}$. Hence,  $ \forall 1\leq i \leq \l, \: p_i\leq t_{\hat{k}'_{(\l)}+\l}$ is equivalent to $ |\{1\leq j\leq m \telque p_j\leq t_{\hat{k}'_{(\l)}+\l }\}| \geq \l +  |\{\l+1\leq j\leq m \telque p_j\leq t_{\hat{k}'_{(\l)}+\l } \}| $ which is equivalent to $ |\{1\leq j\leq m \telque p_j\leq t_{\hat{k}'_{(\l)}+\l }\}|  \geq \l + \hat{k}'_{(\l)}$, by definition of $ \hat{k}'_{(\l)}$. As a consequence, since $\hat{k}$ is a maximum and since $\hat{k}\leq {\hat{k}'_{(\l)}+\l}$, the latter is   equivalent to  $\hat{k}'_{(\l)}+\l = \hat{k}$. This establishes the second equivalence. The first equivalence easily comes from the second equivalence and using that $t_{\hat{k}}\leq t_{\hat{k}'_{(\l)}+\l}$ because $(t_k)_k$ is a nondecreasing sequence.
\end{proof}

For step-down procedures, we use the next lemma.
\begin{lemma} \label{lemma_down}
Consider a step-down procedure  $\SD(\mbf{t})$ using a given threshold $\mbf{t}$ testing $m$ null hypotheses with $p$-values $p_1$, ..., $p_m$ and rejecting
 $\tilde{k}=|\SD(\mbf{t})|$ hypotheses. Denote by $\tilde{k}_{(1)}$ (resp. $\tilde{k}'_{(1)}$) the number of rejections of the step-up procedure of threshold $(t_{j})_{1\leq j \leq m-1}$ (resp. $(t_{j+1})_{1\leq j \leq m-1}$) over $m-1$ hypotheses and using the $p$-values $p_{2}$, ..., $p_m$. 
Then we have point-wise
$$ p_1 \leq t_{\tilde{k}}\:\: \Longleftrightarrow\:\: p_1 \leq t_{\tilde{k}_{(1)}+1}\:\: \Longleftrightarrow\:\:\tilde{k}={\tilde{k}'_{(1)}+1}.$$
\end{lemma}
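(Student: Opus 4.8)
The plan is to argue point-wise, recasting the step-down rejection numbers through the counting device used in the proof of Lemma~\ref{lemma_stepup}. For an integer $r\ge 0$ put $N_m(r)=|\{1\le j\le m\telque p_j\le t_r\}|$ and $N_{m-1}(r)=|\{2\le j\le m\telque p_j\le t_r\}|$, so that $N_m(r)=N_{m-1}(r)+\ind{p_1\le t_r}$ and both maps are nondecreasing in $r$ (since $(t_r)_r$ is nondecreasing, with $t_0=0$). As $p_{(k)}\le t_k$ is equivalent to $N_m(k)\ge k$, one has $\tilde k=\max\{k\telque\forall k'\le k,\ N_m(k')\ge k'\}$, and similarly $\tilde k_{(1)}$ (resp.\ $\tilde k'_{(1)}$) is the largest $k\le m-1$ with $N_{m-1}(k')\ge k'$ for all $k'\le k$ (resp.\ $N_{m-1}(k'+1)\ge k'$ for all $k'\le k$). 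Two point-wise facts will be used throughout: $\tilde k\ge\tilde k_{(1)}$ (because $N_m\ge N_{m-1}$, so the system defining $\tilde k_{(1)}$ implies that defining $\tilde k$) and $\tilde k'_{(1)}\ge\tilde k_{(1)}$ (because $N_{m-1}(k'+1)\ge N_{m-1}(k')$, so again one system implies the other).

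First I would establish $p_1\le t_{\tilde k}\Leftrightarrow p_1\le t_{\tilde k_{(1)}+1}$. If $p_1\le t_{\tilde k_{(1)}+1}$, then $N_m(k')\ge k'$ for every $k'\le\tilde k_{(1)}+1$: for $k'\le\tilde k_{(1)}$ from $N_m(k')\ge N_{m-1}(k')\ge k'$, and for $k'=\tilde k_{(1)}+1$ from $N_m(\tilde k_{(1)}+1)=N_{m-1}(\tilde k_{(1)}+1)+1\ge N_{m-1}(\tilde k_{(1)})+1\ge\tilde k_{(1)}+1$; hence $\tilde k\ge\tilde k_{(1)}+1$ and $t_{\tilde k}\ge t_{\tilde k_{(1)}+1}\ge p_1$. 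Conversely, if $p_1>t_{\tilde k_{(1)}+1}$, then either $\tilde k_{(1)}=m-1$, so $t_{\tilde k}\le t_m<p_1$ since $\tilde k\le m$; or $\tilde k_{(1)}\le m-2$, so maximality of $\tilde k_{(1)}$ forces $N_{m-1}(\tilde k_{(1)}+1)<\tilde k_{(1)}+1$, whence $N_m(\tilde k_{(1)}+1)=N_{m-1}(\tilde k_{(1)}+1)<\tilde k_{(1)}+1$, the system defining $\tilde k$ breaks at index $\tilde k_{(1)}+1$, hence $\tilde k\le\tilde k_{(1)}$ and $t_{\tilde k}\le t_{\tilde k_{(1)}+1}<p_1$.

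Next I would prove $p_1\le t_{\tilde k_{(1)}+1}\Leftrightarrow\tilde k=\tilde k'_{(1)}+1$. Assume $p_1\le t_{\tilde k_{(1)}+1}$; by the first step $\tilde k\ge\tilde k_{(1)}+1$ and $p_1\le t_{k'}$ for all $k'\ge\tilde k_{(1)}+1$, so $N_{m-1}(k')=N_m(k')-1$ on that range. For any $k$ with $\tilde k_{(1)}+1\le k\le m$ one then has: $\tilde k\ge k$ iff $N_m(k'')\ge k''$ for all $\tilde k_{(1)}+1\le k''\le k$; by the substitution $i=k''-1$ this is iff $N_{m-1}(i+1)\ge i$ for all $\tilde k_{(1)}\le i\le k-1$; and since $N_{m-1}(i+1)\ge i$ holds automatically for all $i\le\tilde k_{(1)}$ (by monotonicity of $N_{m-1}$ and the definition of $\tilde k_{(1)}$), this last is equivalent to $\tilde k'_{(1)}\ge k-1$. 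Applying this with $k=\tilde k$ gives $\tilde k'_{(1)}\ge\tilde k-1$, and with $k=\tilde k+1$ (or, when $\tilde k=m$, using $\tilde k'_{(1)}\le m-1$) gives $\tilde k'_{(1)}\le\tilde k-1$; hence $\tilde k=\tilde k'_{(1)}+1$. For the reverse implication I argue by contraposition: if $p_1>t_{\tilde k_{(1)}+1}$, the first step gave $\tilde k\le\tilde k_{(1)}$, so $\tilde k=\tilde k_{(1)}$ (as $\tilde k\ge\tilde k_{(1)}$), and then $\tilde k'_{(1)}+1\ge\tilde k_{(1)}+1>\tilde k_{(1)}=\tilde k$, so $\tilde k\ne\tilde k'_{(1)}+1$. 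Chaining the two equivalences yields the lemma.

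The computations above are all of the same short ``monotone counting'' kind; the hard part will only be the boundary bookkeeping --- the cases $\tilde k_{(1)}=m-1$ (where maximality of $\tilde k_{(1)}$ imposes no constraint at index $m$) and $\tilde k=m$ (where index $\tilde k+1$ is out of range) --- together with keeping in mind that the equivalence between $p_{(k)}\le t_k$ and $N_m(k)\ge k$ is an identity, so that the whole argument is genuinely point-wise, needing neither distinctness nor continuity of the $p_i$'s. An alternative would be to deduce the statement from Lemma~\ref{lemma_stepup} through the step-up/step-down duality $\ol p_i=1-p_i$, $\ol t_r=1-t_{m-r+1}$, but the index reversals involved there look at least as delicate as the direct route above.
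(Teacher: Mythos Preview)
Your proof is correct and follows essentially the same point-wise counting approach as the paper: both recast $\tilde k$, $\tilde k_{(1)}$, $\tilde k'_{(1)}$ via the counts $N_m(r)$, $N_{m-1}(r)$ and chase the two equivalences by elementary monotonicity. The only cosmetic differences are that the paper proves the two equivalences in the opposite order and relies on the preliminary facts $\tilde k'_{(1)}+1\ge\tilde k$ and $p_j\le t_{\tilde k}\Leftrightarrow p_j\le t_{\tilde k+1}$, whereas you use $\tilde k\ge\tilde k_{(1)}$ and $\tilde k'_{(1)}\ge\tilde k_{(1)}$ instead; your boundary bookkeeping at $\tilde k_{(1)}=m-1$ and $\tilde k=m$ is handled correctly.
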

In the  above lemma, we underline that the assertion $p_1 \leq t_{\tilde{k}'_{(1)}+1} \Longrightarrow p_1 \leq t_{\tilde{k}} $ is not true in general.

\begin{proof}
Similarly to the step-up case, the rejection number of $\SD(\mbf{t})$ can be defined as $\tilde{k}= \max\{k\in\{0,1,...,m\}\telque \forall k'\leq k, \:|\{1\leq j\leq m \telque p_j\leq t_{k'}\}|\geq k' \}$. Also remark that we always have $\tilde{k}'_{(1)} +1 \geq  \tilde{k}$ and, by definition of $\tilde{k}$, for any $j$ we have $p_j\leq t_{\tilde{k}} \Leftrightarrow p_j\leq t_{\tilde{k}+1}$. First prove that $p_1 \leq t_{\tilde{k}}\:\:  \Leftrightarrow\:\:\tilde{k}={\tilde{k}'_{(1)}+1}$: using the definitions of $\tilde{k}$ and $\tilde{k}'_{(1)}$ we obtain $p_1 \leq t_{\tilde{k}} \Leftrightarrow |\{2\leq j \leq m \telque p_j\leq t_{\tilde{k}+1}\}|<\tilde{k} \Leftrightarrow \tilde{k}'_{(1)} < \tilde{k}  \Leftrightarrow \tilde{k}'_{(1)} +1= \tilde{k} $.
Second, we prove  $p_1 > t_{\tilde{k}} \Leftrightarrow p_1> t_{\tilde{k}_{(1)}+1}$: since we obviously have $\tilde{k}\geq \tilde{k}_{(1)}$, we get $p_1 > t_{\tilde{k}}   \Rightarrow p_1 > t_{\tilde{k}+1}   \Rightarrow p_1> t_{\tilde{k}_{(1)}+1}$. Conversely, if $p_1> t_{\tilde{k}_{(1)}+1}$, we get $|\{1\leq j \leq m\telque p_j \leq t_{\tilde{k}_{(1)}+1}\}| = |\{2\leq j \leq m\telque p_j \leq t_{\tilde{k}_{(1)}+1}\}|<\tilde{k}_{(1)}+1$ (by definition of $\tilde{k}_{(1)}$), hence $\tilde{k}_{(1)}+1> \tilde{k} $ (by definition of $\tilde{k}$), which implies  $\tilde{k}_{(1)} = \tilde{k} $, thus $p_1> t_{\tilde{k}+1}$ and finally $p_1> t_{\tilde{k}}$.
\end{proof}

\end{document}